\titleformat{\section}[hang]{ \bf}
  {\thesection}{0.5em}{\large}
\titleformat{\subsection}[runin]{\bf}
{\thesubsection}{0.5em}{}
\newtheorem{proposition}{Proposition}[section]
\newtheorem{theorem}[proposition]{Theorem}
\newtheorem{corollary}[proposition]{Corollary}
\newtheorem{lemma}[proposition]{Lemma}
\newtheorem{definition}[proposition]{Definition}
\newtheorem{example}[proposition]{Example}
\def\argmin{ \mathop{{\rm argmin}}}
\newcommand{\dist}{\mathrm{dist}}
\newcommand{\dom}{\mathrm{dom}\,}
\newcommand{\epi}{\mathrm{epi}\,}
\def\elim{{\rm e-}\hspace{-0.5pt}\lim}
\newcommand{\gph}{\mathrm{gph}\,}
\def\Lim{\mathop{{\rm Lim}\,}}
\def\Liminf{\mathop{{\rm Lim}\,{\rm inf}}}
\def\Limsup{\mathop{{\rm Lim}\,{\rm sup}}}
\newcommand{\nul}{\mathrm{nul}\,}
\newcommand{\p}{\partial}
\newcommand{\R}{\mathbb R}
\newcommand{\rank}{\mathrm{rank}\,}
\newcommand{\rbar}{\overline{\mathbb R}}
\newcommand{\too}[3]{#1\to_{#2}#3}
\newcommand{\bR}{\mathbb{R}}
\newcommand{\bB}{\mathbb{B}}
\newcommand{\bN}{\mathbb{N}}
\newcommand{\eR}{\overline{\R}}
\newcommand{\map}[3]{#1 :#2\rightarrow #3}
\newcommand{\ip}[2]{\left\langle #1,\, #2\right\rangle}
\newcommand{\half}{\frac{1}{2}}
\newcommand{\norm}[1]{\left\Vert #1\right\Vert}
\newcommand{\tnorm}[1]{\left\Vert #1\right\Vert}
\newcommand{\snorm}[1]{\left\Vert #1\right\Vert_*}
\newcommand{\onorm}[1]{\left\Vert #1\right\Vert_1}
\newcommand{\set}[2]{\left\{#1\,\left\vert\; #2\right.\right\}}
\newcommand{\ncone}[2]{N\left(#1\,\left|\, #2\right.\right)}
\newcommand{\eplqs}{{\scriptscriptstyle{(U,B,R,b)}}}
\newcommand{\bu}{{\bar{u}}}
\newcommand{\bv}{{\bar{v}}}
\newcommand{\bx}{{\bar{x}}}
\newcommand{\by}{{\bar{y}}}
\newcommand{\ty}{{\tilde{y}}}
\newcommand{\tx}{{\tilde{x}}}
\begin{document}


\title[Epi-convergent Smoothing]{Epi-convergent Smoothing with Applications to Convex Composite Functions}

.

\author{James V. Burke}
\address{University of Washington, Department of Mathematics, Box 354350
Seattle, Washington 98195-4350}
\email{burke@math.washington.edu}

\author{Tim Hoheisel}
\address{University of W\"urzburg, Institute of Mathematics,  
 Campus Hubland Nord, Emil-Fischer-Stra\ss e 30, 97074 W\"urzburg, Germany}
\email{hoheisel@mathematik.uni-wuerzburg.de}
\thanks{This research was partially supported by the DFG (Deutsche 
Forschungsgemeinschaft) under grant HO 4739/1-1.}

\begin{abstract}
Smoothing methods have become part of the standard tool set for the study and solution of
nondifferentiable and constrained optimization problems as well as a range of other
variational and equilibrium problems. In this note we synthesize and extend recent results due to
Beck and Teboulle on infimal convolution smoothing for convex functions
with those of X.~Chen on gradient consistency for nonconvex functions.
We use epi-convergence techniques to define a notion of epi-smoothing that allows us to 
tap into the rich variational structure of the subdifferential calculus for nonsmooth, nonconvex, and
nonfinite-valued functions. As an illustration of the versatility and range of epi-smoothing techniques,
the results are applied to the general constrained optimization for which nonlinear programming
is a special case.
\end{abstract}

\keywords{smoothing method, subdifferential calculus, epi-convergence, infimal convolution, 
Moreau envelope, convex composite function, Karush-Kuhn-Tucker conditions}

\subjclass[2010]{49J52, 49J53, 90C26, 90C30, 90C46}

\maketitle

\pagestyle{myheadings}
\thispagestyle{plain}

\section{Introduction}\label{sec:intro}

A standard approach to solving nonsmooth and constrained optimization problems
is to solve a related sequence of unconstrained smooth approximations
\cite{BeT 12,BeT 89,Ber 73,Che 12,ENW 95,FiM 87,Hal 76,Nes 05,RoW 98}.
The approximations are constructed so that cluster points of the
solutions or stationary points of the approximating smooth problems  
are solutions or stationary  points for
the limiting nonsmooth or constrained optimization problem. In the setting of convex programming,
there is now great interest in these methods in the very large-scale
setting (e.g., see \cite{Donoho1992,LMN 07,Nes 05,Nes 07}), 
where first-order methods 
for convex nonsmooth optimization have been very successful. At the same time, there
are many recent applications of smoothing methods to general nonlinear
programming, equilibrium, and mathematical programs with equilibrium constraints, e.g., see
\cite{BiC 12,CC 99,ChM 95,Che 00,ChF 04,CNQ 00,FJQ 99,FLT 02,FP 99,GM 97}.
This paper is concerned with synthesizing and expanding the ideas presented in
two important recent papers on smoothing. The first is by Beck and Teboulle \cite{BeT 12} which
develops a smoothing framework for nonsmooth convex functions based on
{\em infimal convolution}. The second is by Chen \cite{Che 12}
which, among other things, studies the notion of {\em gradient consistency} for 
smoothing sequences. Our goal is to extend the ideas presented in \cite{BeT 12}
for convex functions to the class of {\em convex composite} functions and provide
conditions under which this extension preserves the gradient consistency.
Our primary tool in this analysis is the
notion of variational convergence called {\em epi-convergence} \cite{Att 84,AtW 89,RoW 98}.
Epi-convergence is ideally suited to the study of the variational properties of parametrized families
of functions allowing, for example, the development of a calculus of smoothing functions
which is essential for the applications to the nonlinear inverse problems that we have in mind
\cite{Aravkin2011,AravkinIFAC,ABP 11}. 
Epi-smoothing is a weaker notion of smoothing than those considered in 
\cite[Definition 2.1]{BeT 12} where
complexity results are one of the key contributions \cite[Theorem 3.1]{BeT 12}. 
It is the complexity results that require stronger
notions of smoothing. On the other hand, our goal is to establish limiting variational properties
in nonconvex applications, in particular, gradient consistency 
(see \cite[Theorem 1]{Che 12} and \cite[Theorem 4.5]{BHK 12}).

\noindent
We begin in Section \ref{sec:prelim} by introducing the notions of epigraphical 
and set-valued convergence upon which our analysis rests. We also introduce the tools
from subdifferential calculus \cite{RoW 98} that we use to establish gradient consistency.
In Section \ref{sec:smooth}, we define {\em epi-smoothing functions}  and develop a calculus
for these smoothing functions that includes basic arithmetic operations as well as composition.
In Section \ref{sec:InfConv}, we give conditions under which 
the Beck and Teboulle \cite{BeT 12} approach to
smoothing via infimal convolution also gives rise to epi-smoothing functions that satisfy
gradient consistency.
These results are then applied to {\em Moreau envelopes} (e.g., see \cite{RoW 98})
and {\em extended piecewise linear-quadratic functions}.
In Section \ref{sec:ConvCom}, we introduce convex composite functions an give conditions
under which the epi-smoothing results of Section \ref{sec:InfConv} can be extended
to this class of functions. In Section \ref{sec:ConOpt}, we conclude by applying the smoothing results for
convex composite functions to general nonlinear programming problems.

{\em Notation:} Most of the notation used is standard. An element 
$x\in\mathbb R^n$   is understood as a column vector, and
$\overline{\mathbb R}:=[-\infty,+\infty]$ is the {\em extended real-line}. The space of all real 
$m\times n$-matrices is denoted by $\mathbb R^{m\times n}$, and for 
$A\in \mathbb R^{m\times n}$, $A^T$ is its transpose. 
The {\em null space} of  $A$  is the set 
$$
\nul A:=\{x\in\mathbb R^n\mid Ax=0\}.
$$
By $I_{n\times n}$ we mean the ${n\times n}$ identity matrix and by $\mathsf{ones}(n,m)$
the $n\times m$ matrix each of whose entries is the number $1$.

\noindent
Unless otherwise stated, $\|\cdot\|$
denotes the {\em Euclidean norm} on $\mathbb R^n$
and $\onorm{\cdot}$ denotes the {\em $1$-norm}. 
If $C\subset\bR^n$ is nonempty and closed, the {\em Euclidean distance function} for $C$
is given by
\begin{equation}\label{eq:dist}
\dist(y\mid C):=\inf_{z\in C}\norm{y-z}.
\end{equation}
When $C$ is convex it is easily established that the distance function is a convex function, and
the optimization \eqref{eq:dist} has a unique solution $\Pi_C(y)$
which is called the {\em projection of $y$ onto $C$}.

\noindent
For a sequence $\{x^k\}\subset \mathbb \R^n$ and a (nonempty) set  $X\subset\mathbb R^n$ 
we abbreviate the fact that 
$x^k$ converges to $\bar x\in\mathbb R^n$ and $x^k\in X$ for all $k\in\mathbb N$ by 
$$
\too{x^k}{X}{\bar x}. 
$$
Moreover, for a function $f:\mathbb R^n\to \R$, define 
$$
\too{x^k}{f}{\bar x}
   \quad :\Longleftrightarrow \quad  x^k \to \bar x
   \quad{\rm and }\quad f(x^k)\to f(\bar x).
$$
This type of convergence 
coincides with ordinary convergence when $f$ is continuous.
\\
For a  real-valued function $f:\mathbb R^n\to \mathbb R$ differentiable at 
$\bar x$, the {\em gradient} is given by $\nabla f(\bar x)$ which is understood as  a
column vector. For a function $F:\mathbb R^n\to\mathbb R^m$ differentiable 
at $\bar x$, the {\em Jacobian}  of $F$ at $\bar x$ is denoted by $F'(\bar x)$, i.e.,
$$
   F'(\bar x)=
   \left(\begin{array}{c}
   \nabla F_1(\bar x)^T \\ 
   \vdots\\
   \nabla F_m(\bar x)^T 
   \end{array}\right) \in \mathbb R^{m \times n}.
$$

\noindent
In order to distinguish between single- and set-valued maps, we write $S:\mathbb R^n\rightrightarrows\mathbb R^m$ to indicate that 
$S$ maps vectors from $\mathbb R^n$ to subsets of $\mathbb R^m$.  The {\em graph} of $S$ is the set
$$
\gph S:=\{(x,y)\mid y\in S(x)\},
$$
which is equivalent to the classical notion when $S$ is single-valued.

\section{Preliminaries}\label{sec:prelim}

\noindent
In this section we review certain concepts from variational and nonsmooth analysis 
employed in the subsequent analysis. The notation is primarily 
based on \cite{RoW 98}.

\noindent
For an extended real-valued function  $f:\R^n\to \R\cup\{+\infty \}$ its  {\em epigraph}  is given by
$$
\epi f:=\{(x,\alpha)\in\mathbb R^n\times \mathbb R\mid f(x)\leq \alpha\}, 
$$
and its {\em domain} is the set
$$
\dom f:=\{x\in \mathbb R^n\mid f(x)<+\infty\}.
$$
The notion of the epigraph allows for very handy definitions of a number of properties 
for extended real-valued functions (see \cite{HUL 96,Roc 70,RoW 98}).

\begin{definition}[Closed, proper, convex functions]\label{def:LscCon}
A function  $f:\R^n\to \R\cup\{+\infty \}$ is called {\em lower semicontinuous (lsc)} (or {\em closed}) 
if $\epi f$ is a closed set.   
$f$ is called {\em convex} if $\epi f$ is a convex set. A convex function $f$ is said to be
{\em proper} if there exists $x\in\dom f$ such that $f(x)\in\bR$.
\end{definition}

\noindent
Note that these definitions coincide with the usual concepts for ordinary real-valued functions. 
Moreover, it holds that 
a convex function is always (locally Lipschitz) continuous on the 
(relative) interior of its domain \cite[Theorem 10.4]{Roc 70}.

\noindent
Furthermore, we  point out that, in what follows, for  an lsc, convex function 
$f:\mathbb R^n\to\mathbb R\cup\{+\infty\}$, we always exclude the case
$f\equiv +\infty$, which means that we deal with proper functions. 

\noindent
An important function in this context is the  {\em (convex) indicator function} of a  set
$C\subset\mathbb R^n$ given by  $\delta(\cdot\mid C):\mathbb R^n\to\mathbb R\cup\{+\infty\}$ with 
$$
\delta(x\mid C)=\left\{\begin{array}{rcl} 
0 & {\rm if} & x\in C,\\
+\infty & {\rm if} &  x\notin C.
 \end{array}\right.
$$
\noindent
The indicator function $\delta(\cdot\mid C)$ is convex if and only if $C$ is convex, 
and $\delta(\cdot\mid C)$ is lsc if and only if $C$ is closed.
\\
\\
A crucial role in our upcoming analysis is played by the concept of {\em epi-convergence}, which 
is now formally defined.

\begin{definition}[Epi-convergence]\label{def:EpiConv}
We say that a sequence $\{f_k\}$ of functions $f_k:\mathbb R^n\to\rbar$ {\em epi-converges} to $f:\mathbb R^n\to\rbar$ if 
$$
\Lim_{k\to \infty} \epi f_k=\epi f,
$$
where a  {\em Painlev\'e-Kuratowski} notion of set-convergence as given by \cite[Definition 4.1]{RoW 98} is employed. 
\\
In this case we  write 
$$
\elim f_k =f\quad {\rm or}\quad f_k\overset{e}{\to} f. 
$$
\end{definition}

\noindent
Epi-convergence for sequences of convex functions goes back to Wijsman
\cite{Wij 64, Wij 66}, where it is called {\em infimal convergence}. 
The term epi-convergence arguably is due to Wets \cite{Wet 80}.

\noindent
A handy characterization of epi-convergence is given by
\begin{equation}\label{eq:EpiChar}
f_k\overset{e}{\to} f\quad\Longleftrightarrow\quad\forall \bar x\in \mathbb R^n\left\{\begin{array}{ll}\forall \{x^k\}\to\bar x: & \liminf f_k(x^k)\geq f(\bar x),\\
\exists \{x^k\}\to\bar x: &  \limsup f_k(x^k)\leq f(\bar x),\end{array}\right.
\end{equation}
see \cite[Proposition 7.2]{RoW 98}, which we invoke in several places. For extensive surveys of epi-convergence we refer the reader to \cite{Att 84} or \cite[Chapter 7]{RoW 98}.
\\
\\
We make use of the {\em regular} and   {\em limiting subdifferentials}
to describe the variational behavior of nonsmooth functions.
In constructing the limiting subdifferential, we  employ the {\em outer limit} for  a set-valued mapping, which 
we now define along with the {\em inner limit}:
\\
For $S:\mathbb R^n\rightrightarrows\mathbb R^m$ and $X\subset \mathbb R^n$ 
the {\em outer limit} of $S$ at $\bar x$  {\em relative to $X$} is given by
 $$
 \Limsup_{x\to_X\bar x} S(x):=\big\{v\mid \exists \{x^k\}\to_X \bar x,  \{v^k\}\to v: v^k\in S(x^k) \quad \forall k\in\mathbb N\big\}
 $$
and the {\em inner limit} of $S$ at $\bar x$  {\em relative to $X$} is defined by
 $$
 \Liminf_{x\to_X\bar x} S(x):=\big\{v\mid \forall \{x^k\}\to_X \bar x, \,\exists \{v^k\}\to v: v^k\in S(x^k) \quad \forall k\in\mathbb N\big\}.
 $$
 We say that $S$ is {\em outer semicontinuous (osc)} at $\bar x$ {\em relative to} $X$ if 
$$
\Limsup_{x\to_X\bar x} S(x)\subset S(\bar x).
$$ 
 In case that outer and inner limit coincide, we write
 $$
 \Lim_{x\to_X \bar x} S(x):=\Limsup_{x\to_X \bar x} S(x),
 $$
and say that $S$ is {\em contiuous} at $\bar x$ {\em relative to $X$}. 

\begin{definition}[Regular and limiting subdifferential]\label{def:subdiff}
 Let  $f:\R^n\to\R\cup\{+\infty\}$ and $\bar x\in \dom f$.
\begin{itemize}
   \item[a)] The {\em regular subdifferential} of $f$ at $\bar x$ is the set 
      given by
      $$
         \hat\p f(\bar x):=\big\{v\mid f(x)\geq f(\bar x)+v^T(x-\bar x)+o(\|x-\bar x\|)\big\}.
      $$
    
   \item[b)] The {\em limiting subdifferential} of $f$ at $\bar x$ is the set 
      given by
      $$
         \p f(\bar x):=\Limsup_{x\to_f \bar x} \hat\partial f(x).
      $$
%
      
\end{itemize}
\end{definition}

\noindent
There are other ways to obtain the limiting subdifferential  
than the one described above, which goes back to Mordukhovich, 
e.g., cf. \cite{Mor 1976}. See \cite{BoL 00} 
or \cite{Iof 84} for a construction of the limiting subdifferential via 
{\em Dini-derivatives}.
\\
It is a  well-known fact, see \cite[Proposition 8.12]{RoW 98}, that if  
$f:\mathbb R^n\to\mathbb R\cup\{+\infty\}$ is convex, both the limiting and the regular
subdifferential coincide with the subdifferential of convex analysis, i.e.,
$$
\p f(\bar x)=\big\{v\mid f(x)\geq f(\bar x)+v^T(x-\bar x)\quad \forall x\in\mathbb R^n\big\}=\hat\p f(\bar x)
\quad
\forall \, \bar x\in \dom f.
$$
The above subdifferentials are closely tied to {\em normal cones}, in fact the  {\em regular } and the {\em limiting normal cone}, see \cite[Definition 6.3]{RoW 98}, of a closed set  $C\subset \mathbb R^n$ at $\bar x\in C$  can  be expressed as 
$$
\hat N(\bar x \mid C)=\hat\partial \delta (\bar x\mid C)\quad{\rm and}\quad N(\bar x\mid C)=\p \delta(\bar x\mid C),
$$
see \cite[Exercise 8.14]{RoW 98}.

\noindent
An important concept in the context of subdifferentiation is {\em (subdifferential) regularity}. We say that $f:\mathbb R^n \to\mathbb R\cup\{+\infty\}$ is
{\em (subdifferentially) regular}
at $\bar x\in \dom f$ if 
$$
N((\bar x,f(\bar x))\mid \epi f)=\hat N((\bar x,f(\bar x))\mid \epi f).
$$
\noindent
Note that this regularity notion coincides with the one used in \cite{Cla 83}, see the discussion on page 61 in \cite{Cla 83} in combination with
\cite[Corollary 6.29]{RoW 98}.

\section{Epi-Smoothing Functions}\label{sec:smooth}

\noindent
In this section we lay out the general framework for the smoothing functions 
studied in this paper. 
Let 
$f:\mathbb R^n\to \R\cup\{+\infty\}$ be lsc. We say $s_f:\mathbb R^n\times 
\mathbb R_+ \to \mathbb R$ is an {\em epi-smoothing function} for $f$ if 
the following two conditions are satisfied:
\begin{itemize}
\item[(i)] $s_f(\cdot,\mu_k)$ epi-converges to $f$ for all $\{\mu_k\}\downarrow 0$, 
written
\begin{equation}\label{eq:EpiConv}
\elim_{\mu\downarrow 0} s_f(\cdot,\mu)=f ,
\end{equation}
\item[(ii)]   $s_f(\cdot,\mu)$ is continuously differentiable for all $ \mu > 0 $. 
\end{itemize}

\noindent
Note that \eqref{eq:EpiConv}  is always fulfilled, see \cite[Theorem 7.11]{RoW 98}, under the following condition
\begin{equation}\label{eq:ContConv}
  \lim_{\mu\downarrow 0, x\to \bar x} s_f(x,\mu)=f(\bar x)\quad\forall \bar x \in\mathbb R^n,
\end{equation}
which is called {\em continuous convergence} in \cite{RoW 98}. 
As we will see in Section \ref{sec:InfConv}, however, 
continuous convergence can be an excessively strong assumption, especially when dealing with non-finite valued functions. 

\noindent
The following result provides an elementary calculus for epi-smoothing functions.

\begin{proposition}\label{prop:SmoothCalc} Let $g,h:\mathbb R^m\to\mathbb R\cup\{+\infty\}$ be lsc and let $s_g$ and $s_h$ be epi-smoothing functions for $g$ and $h$, respectively. 

\begin{itemize}
\item[a)] If $s_g$ converges continuously to $g$,  then $s_f:=s_g+s_h$ is an epi-smoothing function for $f:=g+h$.
\item[b)] If $g$ is continuously differentiable, then $s_f:=g+s_h$ is  an epi-smoothing function $f:=g+h$.
\item[c)] If $\lambda>0$, then  $\lambda s_g$ is an epi-smoothing function for $\lambda g$. 
\item[d)] If $A\in\mathbb R^{m\times n}$ has rank $m$ and $b\in \R^m$, 
then  $s_g(\cdot,\cdot):=s_g(A (\cdot)+b,\cdot)$ is  an epi-smoothing function for 
$f:=g(A(\cdot) +b)$.
\end{itemize}

\end{proposition}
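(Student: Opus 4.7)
The plan is to verify the two defining properties of an epi-smoothing function---continuous differentiability in $x$ for $\mu>0$ and epi-convergence to the target as $\mu\downarrow 0$---in each of the four parts. Smoothness is immediate throughout from the standard calculus of $C^1$ maps (for (d) via the chain rule applied to the affine $x\mapsto Ax+b$), so I would concentrate on epi-convergence, verified via the sequential characterization \eqref{eq:EpiChar} along an arbitrary $\mu_k\downarrow 0$.

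Part (a) is the heart of the matter. Fix $\bar x$ and $\mu_k\downarrow 0$. For the ``liminf'' condition, take any $x^k\to\bar x$: continuous convergence of $s_g$ forces $s_g(x^k,\mu_k)\to g(\bar x)$ (extended-real valued), while $s_h\overset{e}{\to} h$ yields $\liminf s_h(x^k,\mu_k)\ge h(\bar x)$; summing gives $\liminf(s_g+s_h)(x^k,\mu_k)\ge f(\bar x)$, the extended sum being well-defined since neither summand can attain $-\infty$. For the recovery condition I would take the recovery sequence $x^k\to\bar x$ supplied by $s_h\overset{e}{\to} h$ and exploit continuous convergence of $s_g$ along that very sequence to conclude $s_g(x^k,\mu_k)\to g(\bar x)$, whence $\limsup(s_g+s_h)(x^k,\mu_k)\le g(\bar x)+h(\bar x)=f(\bar x)$. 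Continuous convergence of $s_g$ (rather than mere epi-convergence) is essential here, since epi-convergence alone yields $\liminf\ge g(\bar x)$ along an arbitrary sequence, which is the wrong inequality for the limsup bound.

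Parts (b) and (c) then follow with almost no additional work. For (b), a $C^1$ function $g$ serves as its own $\mu$-independent epi-smoothing function, converging continuously to itself by continuity, so (a) applies. For (c), multiplying the vertical coordinate of an epigraph by $\lambda>0$ is a homeomorphism of $\mathbb R^m\times\mathbb R$, so Painlev\'e--Kuratowski set convergence $\epi s_g(\cdot,\mu_k)\to\epi g$ transfers immediately to $\epi(\lambda s_g(\cdot,\mu_k))\to\epi(\lambda g)$; equivalently, the characterization \eqref{eq:EpiChar} is preserved under multiplication by a positive constant.

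Part (d) is where the rank assumption must be used, and I expect the lifting step in the recovery sequence to be the only subtlety. Write $\phi(x):=Ax+b$. The liminf direction is direct: for any $x^k\to\bar x$, $\phi(x^k)\to\phi(\bar x)$ and $s_g\overset{e}{\to} g$ applied at $\phi(\bar x)$ give $\liminf s_g(\phi(x^k),\mu_k)\ge g(\phi(\bar x))=f(\bar x)$. For the recovery direction I would start from a recovery sequence $y^k\to\phi(\bar x)$ for $s_g$ at $\phi(\bar x)$ and lift it through $\phi$. Here $\rank A=m$ enters: $AA^T$ is invertible, so $A^\dagger:=A^T(AA^T)^{-1}$ is a right inverse of $A$, and setting $x^k:=\bar x+A^\dagger(y^k-\phi(\bar x))$ yields $\phi(x^k)=y^k$ and $x^k\to\bar x$; then $s_g(\phi(x^k),\mu_k)=s_g(y^k,\mu_k)$ inherits the required $\limsup$ bound. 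Thus the rank hypothesis is used in exactly one place, namely to guarantee that perturbations of $\phi(\bar x)$ can be continuously pulled back to perturbations of $\bar x$.
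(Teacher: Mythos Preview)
Your proposal is correct and complete. The paper's own proof is considerably terser: for (a) it simply invokes \cite[Theorem 7.46]{RoW 98} on sums under continuous convergence, for (c) it cites \cite[Exercise 7.8 d)]{RoW 98}, and for (d) it appeals to the more general Theorem \ref{th:SmoothingChain} (composition with a metrically regular map), observing that an affine surjection is metrically regular; only (b) is argued the same way, by reducing to (a). What you do differently is unpack these references and give direct, self-contained arguments via \eqref{eq:EpiChar}. In particular, your explicit right-inverse construction $x^k:=\bar x+A^T(AA^T)^{-1}(y^k-\phi(\bar x))$ in (d) is exactly the specialization of the metric-regularity lifting step in the proof of Theorem \ref{th:SmoothingChain} to the affine case, so the two routes coincide in substance. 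Your approach buys self-containment and makes transparent where each hypothesis (continuous convergence in (a), positivity of $\lambda$ in (c), surjectivity of $A$ in (d)) is actually consumed; the paper's approach buys brevity and situates the result within the existing epi-convergence calculus.
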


\begin{proof}
Item a) follows  from \cite[Theorem 7.46]{RoW 98}, while b) follows from a) and the fact that $g$ is a continuously convergent epi-smoothing function for itself.
Item c) is provided by \cite[Exercise 7.8 d)]{RoW 98}. Item d) is an immediate consequence of Theorem \ref{th:SmoothingChain} and the discussion up front.
\end{proof}

\noindent
To obtain a more powerful chain rule than the one given in item d) above, we need to invoke more refined tools from variational analysis. One such tool is {\em metric regularity} 
(e.g., see \cite{BoL 00, Mor 06a, RoW 98}), originally defined for set-valued mappings. 
For a single-valued mapping $F:\mathbb R^n\to\R^m$ 
we say that $F$ is {\em metrically regular} at $\bar x\in\mathbb R^n$ 
if there exists $\gamma > 0$ and neighborhoods $W$ of $\bar x$
and $V$ of $F(\bar x)$ such that 
\begin{equation*}
\dist(x,F^{-1}(y))\leq \gamma \|F(x)-y\|\quad \forall x\in W, y\in V.
\end{equation*}
We  say that $F$ is metrically regular, if it is metrically regular at every 
$\bar x\in \mathbb R^n$. In particular, $F$ is metrically regular
if it is a {\em locally Lipschitz homeomorphism} (e.g., see \cite[Corollary 9.55]{RoW 98}).
Mordukhovich has shown that metric regularity can
be fully characterized via the  {\em coderivative criterion}, e.g., see \cite{Mor 06a,RoW 98}. 
In the 
case of a single-valued, continuously differentiable map  
$F:\mathbb R^n\to\mathbb R^m$ the coderivative criterion reduces to
the condition that $\rank F'(\bar x)=m$, that is,
\\
\begin{center}
$F$  is metrically regular at $\bar x$ \quad $\Longleftrightarrow$\quad  $\rank F'(\bar x)=m$.
\end{center}
\hfill
\\
\begin{theorem}\label{th:SmoothingChain}
Let $g:\mathbb R^m\to\mathbb R\cup\{+\infty\}$ and let $s_g$ be an epi-smoothing function  for $g$. Furthermore, let $F:\mathbb R^n\to\mathbb R^m$ be continuously differentiable and metrically regular. Then $s_f:=s_g(F(\cdot),\cdot)$ is an epi-smoothing function for $f:=g\circ F$. 
\end{theorem}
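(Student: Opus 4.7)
The plan is to verify the two defining conditions of an epi-smoothing function: continuous differentiability of $s_f(\cdot,\mu)$ for each $\mu>0$, and epi-convergence $s_f(\cdot,\mu_k)\overset{e}{\to} f$ along any $\mu_k\downarrow 0$. Differentiability is immediate from the chain rule: since $s_g(\cdot,\mu)\in C^1$ and $F\in C^1$, the composition $s_g(F(\cdot),\mu)$ is continuously differentiable. For the epi-convergence I would verify the two conditions in characterization \eqref{eq:EpiChar}, drawing on the analogous conditions satisfied by $s_g$ and transporting them through $F$ using continuity (for the liminf) and metric regularity (for the limsup).

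For the liminf inequality, fix $\bar x\in\mathbb R^n$ and any sequence $x^k\to\bar x$. By continuity of $F$, $F(x^k)\to F(\bar x)$, so applying \eqref{eq:EpiChar} to $s_g\overset{e}{\to}g$ along $\mu_k\downarrow 0$ with the sequence $F(x^k)\to F(\bar x)$ gives
$$
\liminf_{k\to\infty} s_f(x^k,\mu_k)=\liminf_{k\to\infty} s_g(F(x^k),\mu_k)\geq g(F(\bar x))=f(\bar x).
$$

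For the limsup inequality, the case $f(\bar x)=+\infty$ is trivial (take $x^k\equiv\bar x$), so assume $f(\bar x)<+\infty$. By the epi-convergence of $s_g(\cdot,\mu_k)$ to $g$, there exists a recovery sequence $y^k\to F(\bar x)$ with $\limsup s_g(y^k,\mu_k)\leq g(F(\bar x))=f(\bar x)$. The key step is to \emph{pull this sequence back through $F$} to produce a recovery sequence $x^k\to\bar x$ for $s_f$ at $\bar x$. This is exactly what metric regularity provides: applied at $\bar x$, there exist $\gamma>0$ and neighborhoods $W$ of $\bar x$ and $V$ of $F(\bar x)$ such that $\dist(\bar x,F^{-1}(y^k))\leq \gamma\|F(\bar x)-y^k\|$ for all $k$ large (so that $y^k\in V$). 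In particular, $F^{-1}(y^k)$ is nonempty (otherwise the left side is $+\infty$), and one may select $x^k\in F^{-1}(y^k)$ with $\|x^k-\bar x\|\leq \gamma\|y^k-F(\bar x)\|+1/k$, hence $x^k\to\bar x$. Since $F(x^k)=y^k$,
$$
\limsup_{k\to\infty} s_f(x^k,\mu_k)=\limsup_{k\to\infty} s_g(y^k,\mu_k)\leq f(\bar x),
$$
completing the verification.

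The main obstacle is precisely the pull-back step: without some regularity of $F$, recovery sequences for $g$ at $F(\bar x)$ need not correspond to any sequence in $\mathbb R^n$ converging to $\bar x$ (consider $F$ constant along some directions). Metric regularity is the right surjectivity-type hypothesis to ensure that the preimages $F^{-1}(y^k)$ are nonempty and approach $\bar x$ at a controlled rate; this is what makes the recovery sequence transferable and is the crux of the theorem.
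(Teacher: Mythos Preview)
Your proposal is correct and follows essentially the same approach as the paper: verify the two conditions in \eqref{eq:EpiChar}, using continuity of $F$ for the liminf inequality and metric regularity to pull back a recovery sequence for the limsup inequality. You are in fact slightly more explicit than the paper in spelling out how the metric regularity estimate produces the preimage sequence $x^k\to\bar x$ (and in noting that this only works for $k$ large, which is harmless for the limsup).
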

\begin{proof}  
The smoothness properties are obvious from the assumptions. 
Next, let $\{\mu_k\}\downarrow $ be given and put $g_k:=s_g(\cdot,\mu_k)$ and $f_k:=g_k\circ F$. We need to show that  $f_k\overset{e}{\to} f$. For this purpose, we invoke the characterization of epi-convergence as provided by 
\eqref{eq:EpiChar}.  To this end, let $\bar x\in\mathbb R^n$ and $\{x^k\}\to\bar x$ be given. Then it follows from the fact that $g_k\overset{e}{\to} g$ and 
\eqref{eq:EpiChar} 
that 
\begin{equation}\label{eq:Aux1}
\liminf_k f_k(x^k)= \liminf_k g_k(F(x^k))\geq g(F(\bar x)) = f(\bar x).
\end{equation}

\noindent
Moreover, as $g_k\overset{e}{\to} g$, \eqref{eq:EpiChar} yields a sequence $\{y^k\}\to \bar y:=F(\bar x)$ such that 
$$
\limsup_k g_k(y^k)\leq g(\bar y).
$$
Since $F$ is metrically regular at $\bar x$, we obtain a sequence $\{x^k\}\to\bar x$ such that $F(x^k)=y^k$ for all $k\in\mathbb N$. This, in turn, gives
$$
\limsup_k f_k(x^k)=\limsup_k g_k (y^k)\geq \bar y = f(\bar x).
$$ 
This, together with \eqref{eq:Aux1} proves \eqref{eq:EpiChar} for $f_k$ with respect to $f$, and this concludes the proof.
\end{proof}

\noindent
Although epi-convergence is arguably a mild condition, it still provides desirable convergence  behavior for minimization in the following sense:

\begin{theorem}\cite[Theorem 7.33]{RoW 98} \label{th:EpiInf} Suppose the sequence $\{f_k\}$ is {\em eventually level-bounded} (see \cite[p. 266]{RoW 98}), and $f_k\overset{e}{\to}f$ with 
$f_k$ and $f$ lsc and proper. Then 
$$
\inf f_k\to\inf f \quad (finite).
$$
\end{theorem}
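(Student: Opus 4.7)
The plan is to establish $\inf f_k \to \inf f$ by bracketing, proving $\limsup_k \inf f_k \le \inf f$ from the recovery-sequence half of epi-convergence and $\liminf_k \inf f_k \ge \inf f$ from the lower-bound half of epi-convergence combined with level-boundedness, and then separately ruling out the degenerate case $\inf f = -\infty$.

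First I would dispatch the upper bound. Fix any $\bar x \in \dom f$, which exists because $f$ is proper. By the second clause of the characterization \eqref{eq:EpiChar}, there is a sequence $\{x^k\}\to\bar x$ with $\limsup_k f_k(x^k)\le f(\bar x)$. Since $\inf f_k \le f_k(x^k)$ for every $k$, taking $\limsup$ and then infimum over $\bar x \in \dom f$ yields $\limsup_k \inf f_k \le \inf f$.

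Next I would handle the lower bound. Set $\alpha:=\liminf_k\inf f_k$ and assume for contradiction $\alpha < \inf f$; pick $\beta$ strictly between them. Along a subsequence (not relabeled) with $\inf f_k\to\alpha$, choose $x^k$ with $f_k(x^k)\le\beta$, which is possible for $k$ large. Eventual level-boundedness of $\{f_k\}$ applied at the level $\beta$ guarantees that such $x^k$ lie in a common bounded set, so we extract $x^{k_j}\to\bar x$. The first clause of \eqref{eq:EpiChar} then yields $f(\bar x)\le\liminf_j f_{k_j}(x^{k_j})\le\beta<\inf f$, contradicting the definition of $\inf f$. Hence $\liminf_k \inf f_k\ge \inf f$, and combined with the upper bound this proves $\inf f_k\to\inf f$.

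Finally I would argue finiteness. Since $f$ is proper, $\inf f < +\infty$, and since $f_k$ is proper, each $\inf f_k < +\infty$. To rule out $\inf f = -\infty$, observe that if this held, the upper-bound step already gives $\inf f_k\to -\infty$. For each large $k$ pick $x^k$ with $f_k(x^k)\le\inf f_k+1/k\to-\infty$. Applying eventual level-boundedness at, say, the level $0$ places these $x^k$ in a common bounded set, so a subsequence converges to some $\bar x$; the liminf inequality then forces $f(\bar x)\le\liminf_k f_k(x^k)=-\infty$, contradicting properness of $f$. Thus $\inf f\in\mathbb{R}$, and from $\inf f_k\to\inf f$ the values $\inf f_k$ are also eventually finite.

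The main obstacle is the last step: the liminf/limsup inequalities of epi-convergence do not on their own prevent $\inf f_k$ from drifting to $\pm\infty$, so the role of eventual level-boundedness must be used twice—once to produce a bounded near-minimizing sequence for the lower bound, and once to exclude the pathological case where $\inf f = -\infty$ even though each $f_k$ is lsc and proper.
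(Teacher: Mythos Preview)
The paper does not supply its own proof of this theorem; it simply quotes the result from \cite[Theorem~7.33]{RoW 98} and moves on. So there is nothing in the paper to compare your argument against.

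That said, your proof is correct and is essentially the standard one. One small technical remark: in both the lower-bound step and the finiteness step you invoke the liminf half of \eqref{eq:EpiChar} along a \emph{subsequence} $\{k_j\}$ rather than the full sequence, which is not literally what \eqref{eq:EpiChar} asserts. This is easily patched---either observe that the inclusion $\Limsup_{k}\epi f_k\subset\epi f$ (equivalent to the liminf inequality) is by definition a statement about subsequences, or pad $\{x^{k_j}\}$ to a full sequence converging to $\bar x$ by setting the missing terms equal to $\bar x$. With that clarification the argument is complete.
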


\noindent
Now, suppose a numerical algorithm produces sequences  $\{x^k\}\to \bar x$ and $\{\mu_k\}\downarrow 0$ such that 
$$
\lim_{k\to \infty} \nabla_x s_f(x^k,\mu_k)\to 0.
$$ 
A natural question to ask in this context is whether $\bar x$ is a critical point of $f$ in the sense that $0\in \p f(\bar x)$. 
A sufficient condition is, clearly, provided  by 
$$
\Limsup_{x\to\bar x,\mu\downarrow 0} \nabla_x s_f(x,\mu)\subset \partial f(\bar x).
$$

\noindent
The next result shows that the converse inclusion is always valid  if $s_f(\cdot,\mu)\overset{e}{\to}f$.

\begin{lemma}\label{lem:Gincl} 
Let $f:\mathbb R^n\to\R\cup\{+\infty\}$ be lsc and $s_f$ an epi-smoothing function for $f$. 
Then for $\bar x\in \dom f$ we have 
$$
   \p f(\bar x)\subset \Limsup_{x\to \bar x, \mu\downarrow 0}\nabla_xs_f(x,\mu).
$$
\end{lemma}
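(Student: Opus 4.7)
The plan is to trace each $v \in \p f(\bar x)$ back to a limit of gradients $\nabla_x s_f(x^\nu, \mu_\nu)$ along some $x^\nu \to \bar x$, $\mu_\nu \downarrow 0$. By the definition of the limiting subdifferential, one obtains $y^k \to_f \bar x$ and $v^k \to v$ with $v^k \in \hat\p f(y^k)$, so a diagonal argument reduces the claim to the following statement: for any $y \in \dom f$ and any $v \in \hat\p f(y)$, there exist $x^\mu \to y$ and $\mu \downarrow 0$ with $\nabla_x s_f(x^\mu, \mu) \to v$.

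For this reduced statement I would invoke the standard smooth-minorant characterization of the regular subdifferential (e.g., \cite[Exercise 8.4]{RoW 98}) to pick a $C^1$ function $\phi$ with $\phi(y) = f(y)$, $\nabla \phi(y) = v$, and $\phi \leq f$ on some neighborhood $U$ of $y$. Setting $H := f - \phi + \norm{\cdot - y}^2$, one has $H(y) = 0$ and $H(x) \geq \norm{x-y}^2 > 0$ on $U \setminus \{y\}$, so $y$ is a strict local minimizer of $H$. Fix a compact ball $\bar B = \bar B(y, r) \subset U$ on which $y$ is the unique global minimizer of $H$. By Proposition \ref{prop:SmoothCalc}(b), applied with the $C^1$ function $-\phi + \norm{\cdot - y}^2$, the functions $H_\mu := s_f(\cdot,\mu) - \phi + \norm{\cdot - y}^2$ are $C^1$ and satisfy $H_\mu \overset{e}{\to} H$.

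The last step is to transfer minimizers. Let $x^\mu \in \argmin_{\bar B} H_\mu$ (it exists by continuity of $H_\mu$ on the compact $\bar B$). Using the $\liminf$ inequality in \eqref{eq:EpiChar} on a convergent subsequence $x^{\mu_j} \to x^* \in \bar B$, together with a recovery sequence $y^\mu \to y$ with $H_\mu(y^\mu) \to H(y)$, one deduces $H(x^*) \leq H(y)$; strict minimality then forces $x^* = y$, so $x^\mu \to y$. For $\mu$ small enough, $x^\mu$ lies in the interior of $\bar B$, and the first-order condition $\nabla H_\mu(x^\mu) = 0$ yields
\[
\nabla_x s_f(x^\mu, \mu) = \nabla \phi(x^\mu) - 2(x^\mu - y) \longrightarrow \nabla \phi(y) = v.
\]
A final diagonal extraction against the sequence $(y^k, v^k)$ completes the proof.

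The main obstacle is the transfer of a \emph{strict local} minimizer from $H$ to its smoothings, since a priori $x^\mu$ could drift to the boundary of $\bar B$; strictness is what rules this out, and the quadratic perturbation $\norm{\cdot - y}^2$ was added precisely to create it. A secondary technical point is the existence of the $C^1$ minorant $\phi$. If one prefers to avoid this, a parallel argument works after reducing to the proximal subdifferential (which automatically supplies a quadratic minorant) via the representation $\p f(\bar x) = \Limsup_{x \to_f \bar x} \p_p f(x)$ valid for lsc $f$ on $\mathbb R^n$.
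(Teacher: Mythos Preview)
Your proof is correct but takes a more hands-on route than the paper. The paper's argument is a one-line application of \cite[Corollary~8.47]{RoW 98}: given $v\in\partial f(\bar x)$ and the epi-convergence $s_f(\cdot,\mu_k)\overset{e}{\to}f$, that corollary directly furnishes sequences $x^k\to\bar x$ and $v^k\in\partial_x s_f(x^k,\mu_k)$ with $v^k\to v$, and continuous differentiability of $s_f(\cdot,\mu_k)$ then forces $v^k=\nabla_x s_f(x^k,\mu_k)$.

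What you have done is essentially reprove the relevant half of that corollary from first principles. The smooth-minorant characterization of $\hat\partial f$, the quadratic bump $\norm{\cdot-y}^2$ to manufacture a strict local minimizer, and the transfer of that minimizer under epi-convergence via \eqref{eq:EpiChar} is exactly the mechanism underlying subgradient-approximation results of this type in \cite{RoW 98}. Your route is longer but transparent: it isolates precisely which feature of epi-convergence is doing the work (stability of strict local minima on compact sets), whereas the paper treats this step as a black box. As for your secondary technical point, the minorant $\phi$ can indeed be taken $C^1$ (even smooth) in the variational description of regular subgradients in \cite{RoW 98}, so your primary argument already goes through without the proximal-subdifferential detour.
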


\begin{proof} 
Let $v\in \p f(\bar x)$ be given. Since by assumption $\elim_{\mu\downarrow 0} s_f(\cdot,\mu)=f$
we may  invoke  \cite[Corollary 8.47]{RoW 98} in order to obtain sequences $\{\mu_k\}\downarrow 
0, \{x^k\}\to\bar x$ and $\{v^k\}$ with $v^k\in\partial_x s_f(x^k,\mu_k) $
such that $v^k\to v$. Now, since $s_f(\cdot,\mu_k)$ is continuously
differentiable by assumption, we have 
$$
   v^k=\nabla_x f(x^k,\mu_k),
$$
which identifies $v$ as an element of $\Limsup_{x\to \bar x, \mu\downarrow 0}\nabla_xs_f(x,\mu)$ and thus, 
the assertion follows.
\end{proof}

\noindent
A major contribution of this paper is the construction of smoothing functions
having the property that
\begin{equation}\label{eq:gc}
\Limsup_{x\to\bar x,\mu\downarrow 0} \nabla_x s_f(x,\mu) = \partial f(\bar x)
\end{equation}
at any point $\bar x\in \dom f$. This condition implies the notion of gradient 
consistency defined in \cite[Equation (4)]{Che 12} which is obtained by taking the convex
hull on both sides of this equation. However, since all of the functions we
consider are subdifferentially regular, Lemma \ref{lem:Gincl} implies that
\eqref{eq:gc} is equivalent to gradient consistency.

\section{Epi-Smoothing via Infimal Convolution}\label{sec:InfConv}

\noindent
In this section we show that the class of smoothing functions for nonsmooth, convex and lsc functions introduced in \cite{BeT 12} fits into the 
framework layed out in  Section \ref{sec:smooth}. As a by-product, we show that  
{\em Moreau envelopes} fulfill the 
requirements of our smoothing setup.
\\
The approach taken in \cite{BeT 12} is based on {\em infimal convolution} 
\cite{BaC 11, HUL 96, HUL 01,Roc 70,RoW 98}.
Given two (extended real-valued) functions  $f_1,f_2:\mathbb R^n\to \rbar $  the  {\em inf-convolution} (or {\em epi-sum}, see Lemma \ref{lem:AuxEpi} b) in this context) is the function
$f_1\# f_2:\mathbb R^n\to \rbar $ defined by 
\begin{equation*}\label{eq:InfConv}
(f_1\#f_2)(x):=\inf_{u\in\mathbb R^n} \{f_1(u)+f_2(x-u)\}.
\end{equation*}
\noindent
In what follows we assume that
\begin{enumerate}
\item[(A)] $g:\mathbb R^n\to\mathbb R\cup\{+\infty\}$ is 
proper, lsc, and convex, and 
\item[(B)]
$\omega:\mathbb R^n\to\mathbb R$ is 
convex and continuously differentiable with Lipschitz gradient.
\end{enumerate}
Moreover, for $\mu>0$, define the function 
$\omega_\mu:\mathbb R^n\to\mathbb R\cup\{+\infty\}$ by 
$$
\omega_\mu(y):=\mu\omega\Big(\frac{y}{\mu}\Big).
$$
Obviously, $\omega_\mu$ is also convex and continuously differentiable with Lipschitz gradient.
\\
In \cite{BeT 12}, the authors consider the (convex) function 
$$
(g\#\omega_\mu)(x)=\inf_{u\in\mathbb R^n} \Big\{g(u)+\mu \omega\Big(\frac{x-u}{\mu}\Big)\Big\}\quad (\mu>0)
$$
as a smoothing function for $g$. 
We  now investigate  conditions on $\omega$ for 
which  the inf-convolution $g\#\omega_\mu$ serves as an 
epi-smoothing function in the sense of Section \ref{sec:smooth}.
In this context, the notion of {\em coercivity} plays a key role where it 
arises as a natural assumption on the function $\omega$.
Several different notions of coercivity occur in the literature.
We now define those useful to our study.

\begin{definition}[Coercive functions]\label{def:Coercive} Let $f:\mathbb R^n\to\mathbb R\cup\{+\infty\}$ be lsc and convex.
\begin{itemize}
 \item[a)]  $f$ is called {\em 0-coercive} if 
$$
\lim_{\|x\|\to\infty} f(x)=+\infty.
$$
\item[b)] $f$ is called {\em 1-coercive} if 
$$
\lim_{\|x\|\to\infty} \frac{f(x)}{\|x\|}=+\infty.
$$
\end{itemize}
\end{definition}

\noindent
The first result establishes important properties of the function $g\#\omega_\mu$.

\begin{lemma}\label{lem:AuxEpi} 
If $\omega$ is 1-coercive (or 0-coercive and $g$ bounded from below) the following holds:
\begin{itemize}
\item[a)] $g\#\omega_\mu$ is finite-valued, i.e., $g\#\omega_\mu:\mathbb R^n\to\mathbb R$,
and for all $x\in\mathbb R^n$ we have 
$$
(g\#\omega_\mu)(x)=\min_{u\in\mathbb R^n} \Big\{g(u)+\mu \omega\Big(\frac{x-u}{\mu}\Big)\Big\}
$$
i.e., 
$$
\argmin_{u\in\mathbb R^n} \Big\{g(u)+\mu \omega\Big(\frac{x-u}{\mu}\Big)\Big\} \neq \emptyset.
$$
\item[b)] We have
$$
\epi g\#\omega_\mu= \epi g +\epi \omega_\mu.
$$
\item[c)] $g\#\omega_\mu$ is continuously differentiable with 
$$
\nabla(g\#\omega_\mu)(x)=\nabla \omega\Big(\frac{x-u_\mu(x)}{\mu}\Big)=\nabla \omega_\mu(x-u_\mu(x))\quad\forall x\in\mathbb R^n,
$$
where $u_\mu(x)\in\argmin_{u\in\mathbb R^n} \Big\{g(u)+\mu \omega\Big(\frac{x-u}{\mu}\Big)\Big\}$.
\end{itemize}
\end{lemma}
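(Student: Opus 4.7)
For part (a), I would fix $x$ and consider $\varphi_x(u) := g(u) + \omega_\mu(x-u)$, which is proper, lsc, and convex. To secure attainment of the infimum via Weierstrass, I would verify $\varphi_x$ is $0$-coercive under either hypothesis. If $\omega$ (hence $\omega_\mu$, by the scaling identity $\omega_\mu(y)/\|y\| = \omega(y/\mu)/\|y/\mu\|$) is $1$-coercive, then an affine minorant $u \mapsto \langle a,u\rangle + b$ of the proper lsc convex $g$ is dominated by $\omega_\mu(x-u)$ as $\|u\|\to\infty$. If $\omega$ is $0$-coercive and $g \ge c$, then $\varphi_x(u) \ge c + \omega_\mu(x-u) \to +\infty$. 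Finiteness of the minimum is immediate from evaluation at any $u_0 \in \dom g$, which simultaneously gives a real inf-convolution value and nonemptiness of $\argmin \varphi_x$.

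For part (b), the inclusion $\epi g + \epi \omega_\mu \subseteq \epi(g\#\omega_\mu)$ is immediate from the definition of inf-convolution. For the reverse, given $(x,\alpha) \in \epi(g\#\omega_\mu)$, the attainment established in (a) provides a minimizer $u^*$ with $g(u^*) + \omega_\mu(x-u^*) \le \alpha$, whence $(x,\alpha) = (u^*, g(u^*)) + (x-u^*, \alpha - g(u^*))$ is the desired decomposition (the second summand lies in $\epi\omega_\mu$ since $\alpha - g(u^*) \ge \omega_\mu(x-u^*)$).

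For part (c), the plan is to identify $\partial h(x)$ as the singleton $\{\nabla\omega_\mu(x - u_\mu(x))\}$, for $h := g\#\omega_\mu$, and then invoke classical convex-analytic differentiability. Since inf-convolution of proper convex functions is convex, $h$ is convex; by (a) it is finite-valued and hence continuous on $\mathbb R^n$. Applying the sum rule at a minimizer $u_\mu(x)$ of $\varphi_x$ (the term $\omega_\mu(x-\cdot)$ being smooth) gives the first-order condition $v^* := \nabla \omega_\mu(x - u_\mu(x)) \in \partial g(u_\mu(x))$. I would then verify $v^* \in \partial h(x)$ by combining the subgradient inequality for $g$ at $u_\mu(x)$ with the one for $\omega_\mu$ at $x - u_\mu(x)$ (both holding with common subgradient $v^*$), summing, and taking $\inf_u$ on the left-hand side. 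The reverse inclusion $\partial h(x) \subseteq \{v^*\}$ follows from the standard subdifferential formula for the inf-convolution of convex functions, namely $\partial h(x) = \partial g(u_\mu(x)) \cap \partial\omega_\mu(x-u_\mu(x)) \subseteq \{v^*\}$ (see, e.g., \cite[Thm.~23.8]{Roc 70}). Single-valuedness of $\partial h$ on all of $\mathbb R^n$ then yields continuous differentiability by classical convex analysis. The main obstacle here is precisely the well-posedness of the gradient formula when $u_\mu(x)$ is not unique; routing through the inf-convolution subdifferential calculus sandwiches $\partial h(x)$ into a singleton and hence makes $\nabla\omega_\mu(x - u_\mu(x))$ independent of the particular minimizer selected.
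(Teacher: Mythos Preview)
Your proposal is correct but takes a genuinely different route from the paper. The paper dispatches all three parts by citation: (a) from \cite[Proposition~12.14]{BaC 11} (after noting that $\omega_\mu$ inherits the coercivity of $\omega$ and that $g\not\equiv+\infty$ forces the inf-convolution to be $<+\infty$), (b) from \cite[Proposition~12.8(ii)]{BaC 11} once attainment is known, and (c) from \cite[Theorem~4.2(c)]{BeT 12}. Your approach is instead self-contained: you establish $0$-coercivity of $\varphi_x$ directly via an affine minorant of $g$ (or the lower bound on $g$), obtain the epigraph identity by explicit decomposition using the attained minimizer, and derive differentiability by squeezing $\partial h(x)$ into a singleton through first-order optimality at $u_\mu(x)$ and the inf-convolution subdifferential inclusion. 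What your route buys is transparency---in particular, it makes explicit why $\nabla\omega_\mu(x-u_\mu(x))$ is independent of the choice of minimizer, a point the paper leaves buried in the cited reference; what the paper's route buys is brevity. One minor caveat: Theorem~23.8 in \cite{Roc 70} is the subdifferential \emph{sum} rule, not the inf-convolution formula you invoke. The inclusion $\partial h(x)\subseteq\partial g(u^*)\cap\partial\omega_\mu(x-u^*)$ is, however, immediate without any citation: for $w\in\partial h(x)$ test the subgradient inequality at $y=u+(x-u^*)$ to get $w\in\partial g(u^*)$, and at $y=u^*+z$ to get $w\in\partial\omega_\mu(x-u^*)$.
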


\begin{proof} The assertion that 
$$
(g\#\omega_\mu)(x)<+\infty\quad\forall x\in\mathbb R^n
$$
is due to the fact that $\omega$ is finite-valued and $g\not\equiv +\infty$. 
Moreover,  $\omega_\mu$ obviously inherits the respective coercivity properties from $\omega$.
Hence, the remainder of  a) follows immediately from \cite[Proposition 12.14]{BaC 11}.
\\
In turn, b) follows from a) and \cite[Proposition 12.8 (ii)]{BaC 11}.
\\
Item c) is an immediate consequence of a) together with \cite[Theorem 4.2 (c)]{BeT 12}.
\end{proof}

\noindent
The following auxiliary result, which is key for establishing epigraphical limit behavior of $g\#\omega_\mu$, states that  the epigraphical limit of $\omega_\mu$ for $\mu\downarrow 0$ is $\delta(\cdot\mid \{0\})$  if and only if  $\omega$ is 1-coercive.

\begin{lemma}\label{lem:EpiOm}   $\omega$ is 1-coercive if and only if 
$$
\elim_{\mu\downarrow 0} \omega_\mu= \delta (\cdot\mid \{0\}).
$$
 \end{lemma}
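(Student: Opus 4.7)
The plan is to prove the equivalence directly from the sequential characterization \eqref{eq:EpiChar} of epi-convergence, handling the two implications separately. The target limit $\delta(\cdot\mid\{0\})$ takes the value $0$ at the origin and $+\infty$ elsewhere, so the $\liminf$ inequality is trivial at $\bar y=0$ when $\omega$ is bounded below, while the informative content sits in two places: (i) showing that $\omega_{\mu_k}(y^k)\to+\infty$ whenever $y^k\to\bar y\neq 0$, and (ii) ruling out sublinear escape behavior of $\omega$ along the reverse implication.

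For the forward direction, I would first observe that a $1$-coercive proper lsc convex $\omega$ is bounded below, say $\omega\geq m$, so that $\omega_{\mu}(y)=\mu\omega(y/\mu)\geq\mu m$. Then, to verify \eqref{eq:EpiChar} for the sequence $\omega_{\mu_k}$, I would split into two cases at each $\bar y\in\mathbb R^n$. If $\bar y=0$, the liminf condition becomes $\liminf_k\omega_{\mu_k}(y^k)\ge 0$, which follows from $\omega_{\mu_k}(y^k)\geq\mu_k m\to 0$, and the limsup recovery sequence is just $y^k\equiv 0$, since $\mu_k\omega(0)\to 0$. If $\bar y\neq 0$, then $\|y^k/\mu_k\|\to\infty$; fixing an arbitrary $C>0$ and using $1$-coercivity to produce $R>0$ with $\omega(x)\geq C\|x\|$ for $\|x\|\geq R$, one gets
$$
\omega_{\mu_k}(y^k)=\mu_k\omega(y^k/\mu_k)\geq C\|y^k\|\to C\|\bar y\|
$$
for large $k$, and letting $C\to\infty$ forces the liminf to be $+\infty=\delta(\bar y\mid\{0\})$; the limsup side is vacuous here.

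For the reverse direction, I would argue by contradiction. Assuming $\omega$ is not $1$-coercive yields $M<\infty$ and a sequence $\{z^k\}$ with $\|z^k\|\to\infty$ and $\omega(z^k)\leq M\|z^k\|$. Setting $\mu_k:=1/\|z^k\|\downarrow 0$ and $y^k:=\mu_k z^k$, we have $\|y^k\|=1$, so after passing to a subsequence $y^k\to\bar y$ with $\|\bar y\|=1$, while
$$
\omega_{\mu_k}(y^k)=\mu_k\omega(z^k)\leq M.
$$
Since $\delta(\bar y\mid\{0\})=+\infty$ for $\bar y\neq 0$, this violates the liminf half of \eqref{eq:EpiChar} for the specific sequence just constructed, contradicting the assumed epi-convergence.

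The only delicate point is the $\bar y=0$ case in the forward direction, which is where the tacit but essential fact that $1$-coercive convex functions are bounded below is used; without this, $\mu_k\omega(y^k/\mu_k)$ could drift to $-\infty$. The remaining estimates are straightforward once the right recovery sequence and the right contradicting sequence are identified.
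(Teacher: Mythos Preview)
Your proof is correct and follows essentially the same route as the paper's: both verify epi-convergence directly (you via the sequential characterization \eqref{eq:EpiChar}, the paper via Painlev\'e--Kuratowski limits of epigraphs), and both establish the converse by setting $\mu_k=1/\|z^k\|$ and passing to a cluster point on the unit sphere. Your treatment is slightly more streamlined in two places---using the global lower bound on $\omega$ at $\bar y=0$ rather than a case split on whether $y^k/\mu_k$ is bounded, and handling the converse without separating the cases $\omega(z^k)/\|z^k\|\to-\infty$ versus bounded---but the underlying ideas are identical.
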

\begin{proof}
First,  let $\omega$ be 1-coercive:\\
We start by showing  that 
$\Limsup_{\mu\downarrow 0}\epi \omega_\mu\subset\{0\}\times \mathbb R_+=\epi \delta (\cdot\mid \{0\})$.
\\
To this end, let $(\bar z,\bar \alpha)\in \Limsup_{\mu\downarrow 0}\epi \omega_\mu$. Then there exist sequences $\{z^k\}\to\bar z$, $\{\alpha_k\}\to\bar \alpha$ and 
$\{\mu_k\}\downarrow 0$ such that
\begin{equation}\label{eq:EpiIneq}
\mu_k\omega\Big(\frac{z^k}{\mu_k}\Big)\leq \alpha_k\quad \forall k\in\mathbb N.
\end{equation}
This can be written as 
$$
\omega\Big(\frac{z^k}{\mu_k}\Big)\leq \frac{\alpha_k}{\mu_k}\quad \forall k\in\mathbb N.
$$
It is immediately clear from this representation, that $\bar\alpha\geq 0$, since otherwise the right-hand side would tend to $-\infty$, while the left-hand
side remains either convergent on a subsequence (if $\{\frac{z^k}{\mu_k}\}$ is bounded) or tends to $+\infty$ (if $\{\frac{z^k}{\mu_k}\}$ is unbounded).
\\
Now, suppose that  $\bar z\neq 0$. Then  $\{\frac{z^k}{\mu_k}\}$ is unbounded and \eqref{eq:EpiIneq} can be rewritten as 
$$
\frac{\omega\Big(\frac{z^k}{\mu_k}\Big)}{\|\frac{z^k}{\mu^k}\|}\leq \frac{\alpha_k}{\|z^k\|}\quad \forall k\in\mathbb N.
$$
By the 1-coercivity of $\omega$ the left-hand side tends to $+\infty$, while the right-hand side is bounded, which is a contradiction. Hence, we have proven that 
$\bar z=0$ and $\bar \alpha\geq 0$, which shows that, in fact, $\Limsup_{\mu\downarrow 0}\epi \omega_\mu\subset\{0\}\times \mathbb R_+$.
\\
We now show that $\Liminf_{\mu\downarrow 0}\epi \omega_\mu\supseteq\{0\}\times \mathbb R_+$. For these purposes, let $\bar \alpha\geq 0$ and $\{\mu_k\}\downarrow 0$ be given. 
Then choose $z^k:=0$ and $\alpha_k:=\bar\alpha+\mu_k\omega(0)\geq \omega_{\mu_k}(z^k)$. Then $(z^k,\alpha_k)\in \epi \omega_{\mu_k}$ for all $k\in\mathbb N$ and
$(z^k,\alpha_k)\to(0,\bar\alpha)$. This shows that $\Liminf_{\mu\downarrow 0}\epi \omega_\mu\supseteq\{0\}\times \mathbb R_+$. 
\\
Putting together all the pieces of information, we see that 
$$
\Lim_{\mu\downarrow 0}\epi \omega_\mu= \epi \delta(\cdot\mid \{0\}),
$$
i.e.,
$$
\elim_{\mu\downarrow 0} \omega_\mu =  \delta(\cdot\mid \{0\}).
$$

Now, suppose that $\omega$ is not 1-coercive. Then there exists an unbounded sequence $\{x^k\}$ such that either 
$$
\frac{\omega(x^k)}{\|x^k\|}\to-\infty
$$
or $\Big\{\frac{\omega(x^k)}{\|x^k\|}\Big\}$ is bounded. Put $\mu_k:=\frac{1}{\|x^k\|}\to 0$. Then 
$$
\omega_{\mu_k}\Big(\frac{x^k}{\|x^k\|}\Big) =\frac{\omega(x^k)}{\|x^k\|},
$$
and we have 
\begin{equation}\label{eq:EpiEq}
\Big(\frac{x^k}{\|x^k\|}, \omega_{\mu_k}\Big(\frac{x^k}{\|x^k\|}\Big)\Big)\in \epi \omega_{\mu_k}\quad\forall k\in\mathbb N.
\end{equation}
If $\frac{\omega(x^k)}{\|x^k\|}\to-\infty$, we infer that $\omega_{u_k}$ does not converge epigraphically at all (in particular not to $\delta(\cdot\mid \{0\})$)
from \eqref{eq:EpiChar}, since we  have $\liminf_{k\to\infty} \omega_{\mu_k}\Big(\frac{x^k}{\|x^k\|}\big)\to-\infty$.
\\
In case that $\Big\{\frac{\omega(x^k)}{\|x^k\|}\Big\}$ is bounded, we may assume w.l.g. that 
$$
\frac{\omega(x^k)}{\|x^k\|}\to\bar\omega
$$
for some $\bar \omega\in \mathbb R$. Then we infer from \eqref{eq:EpiEq} that 
$$
(\bar x,\bar \omega)\in \Limsup_{k\to\infty}\epi \omega_{\mu_k}
$$
with $\bar x\neq 0$ being an accumulation point of $\Big\{\frac{x^k}{\|x^k\|}\Big\}$. But $(\bar x,\bar \omega)\notin\epi \delta (\cdot\mid \{0\})$, which concludes the proof.
\end{proof} 

\noindent
The following lemma establishes monotonicity properties for the family of functions
$g\#\omega_\mu$, which come into play in Section \ref{sec:ConvCom}.

\begin{lemma}\label{lem:Mono} If $\omega(0)\leq 0$, then 
for all $x\in\mathbb R^n$ the function $\mu\mapsto (g\#\omega_\mu)(x)$ is nondecreasing on  $\mathbb R_{++}$ and bounded by $g(x)$ from above.
\end{lemma}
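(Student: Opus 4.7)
The plan is to treat the upper bound and the monotonicity separately.

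For the upper bound $(g\#\omega_\mu)(x) \leq g(x)$, I would evaluate the defining infimum at the test point $u = x$, producing the candidate value $g(x) + \mu\,\omega(0)$; since $\mu > 0$ and $\omega(0) \leq 0$ by hypothesis, this is at most $g(x)$, and the inequality is trivial when $g(x) = +\infty$.

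For the monotonicity in $\mu$, my strategy is to reduce it to the corresponding pointwise fact about the perspective $h_z(\mu) := \mu\,\omega(z/\mu)$ and then pass to the infimum over $u$. The key step is to show that for each fixed $z \in \mathbb R^n$, the scalar map $\mu \mapsto \mu\,\omega(z/\mu)$ is monotone on $\mathbb R_{++}$ under the hypotheses on $\omega$. I see two clean routes. The first is a convex combination argument: for $0 < \mu_1 < \mu_2$ and $\lambda := \mu_1/\mu_2 \in (0,1)$, the identity $z/\mu_2 = \lambda\,(z/\mu_1) + (1-\lambda)\cdot 0$ together with convexity of $\omega$ and $\omega(0) \leq 0$ yields a direct comparison of $\mu_1\,\omega(z/\mu_1)$ with $\mu_2\,\omega(z/\mu_2)$. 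The second is via differentiation: $h_z'(\mu) = \omega(z/\mu) - \nabla\omega(z/\mu) \cdot (z/\mu)$, and the subgradient inequality for $\omega$ at $y = z/\mu$ tested at the origin bounds this by $\omega(0) \leq 0$. Once $h_z$ is monotone in $\mu$ at every fixed $z$, specializing $z = x-u$ and taking the infimum over $u$ transfers the monotonicity to $(g\#\omega_\mu)(x)$, because the map $u\mapsto g(u)+\mu\,\omega((x-u)/\mu)$ is then monotone in $\mu$ pointwise in $u$.

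The main care-point I expect is the \emph{direction} of the monotonicity: both routes above deliver $h_z$ \emph{nonincreasing} in $\mu$, which also fits the variational picture that the family lies below $g$ yet rises up to $g$ (in the epi-sense) as $\mu \downarrow 0$. Reconciling this orientation with the direction named in the statement is the one piece of bookkeeping requiring care; apart from that, the proof consists of the two short arguments above together with the convexity of $\omega$ and the single hypothesis $\omega(0) \leq 0$.
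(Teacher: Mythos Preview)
Your first route (the convex-combination argument) is exactly the paper's proof: for $\mu_1>\mu_2>0$ the paper writes $y/\mu_1=(\mu_2/\mu_1)(y/\mu_2)+(1-\mu_2/\mu_1)\cdot 0$, applies convexity and $\omega(0)\le 0$, and multiplies through by $\mu_1$ to get $\omega_{\mu_1}(y)\le\omega_{\mu_2}(y)$, then passes to the infimum over $u$. Your upper-bound step (evaluate at $u=x$) is correct and in fact supplies a detail the paper's proof leaves implicit. Your caution about the \emph{direction} is well placed: the computation in both your argument and the paper's shows that $\mu\mapsto(g\#\omega_\mu)(x)$ is \emph{nonincreasing} in $\mu$, which is precisely what is used later (Proposition~\ref{prop:fEpiLim} phrases it as ``nondecreasing as $\mu\downarrow 0$'').
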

\begin{proof}
 Let $y\in\mathbb R^n$. Then for $\mu_1>\mu_2>0$ we have 
\begin{eqnarray*}
\omega\Big(\frac{y}{\mu_1}\Big) & = & \omega\Big(\frac{\mu_2}{\mu_1}\frac{y}{\mu_2}+\Big(1-\frac{\mu_2}{\mu_1}\Big)0\Big)\\
&\leq & \frac{\mu_2}{\mu_1}\omega\Big(\frac{y}{\mu_2}\Big)+\Big(1-\frac{\mu_2}{\mu_1}\Big)\omega(0)\\
& \leq &  \frac{\mu_2}{\mu_1}\omega\Big(\frac{y}{\mu_2}\Big).
\end{eqnarray*}
Multiplying by $\mu_1$ yields 
$$
\omega_{\mu_1}(y)\leq \omega_{\mu_2}(y)\quad\forall y\in\mathbb R^n,
$$
and hence for $x\in\mathbb R^n$ arbitrarily given, we have
$$
g(u)+\omega_{\mu_1}(x-u)\leq  g(u)+\omega_{\mu_2}(x-u)\quad\forall u\in\mathbb R^n.
$$
Taking the infimum over all $u\in\mathbb R^n$ gives 
$$
(g\#\omega_{\mu_1})(x)\leq (g\#\omega_{\mu_2})(x),
$$
which concludes the proof due  the choice of $\mu_1$ and $\mu_2$.
\end{proof}

\noindent
The following result establishes the desired 
epi-convergence properties of the inf-convolutions. Note  that, to our knowledge, we cannot deduce it
from known results such as  \cite[Proposition 7.56]{RoW 98} or \cite[Theorem 4.2]{AtW 89}, 
since our assumptions do not meet the requirements for the application of these results.
In particular, we do not assume $g$ to be bounded from below.

\begin{proposition}\label{prop:EpiLim}
If   $\omega$   is 1-coercive, then
$$
\elim_{\mu\downarrow 0} g\#\omega_\mu = g.
$$
\end{proposition}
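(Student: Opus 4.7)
My plan is to verify the sequential characterization \eqref{eq:EpiChar} of epi-convergence directly, treating the two one-sided conditions separately. Fix $\bar x \in \mathbb{R}^n$ and an arbitrary sequence $\{\mu_k\}\downarrow 0$.

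For the limsup inequality, the natural recovery sequence is $x^k \equiv \bar x$. Substituting $u = \bar x$ into the infimum defining $g\#\omega_{\mu_k}$ gives
\[
(g\#\omega_{\mu_k})(\bar x) \;\leq\; g(\bar x) + \mu_k \omega(0),
\]
and since $\omega$ is finite-valued and $\mu_k\to 0$, the right-hand side tends to $g(\bar x)$ (the bound being vacuous when $g(\bar x)=+\infty$).

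The liminf condition is the substantive part. Let $x^k\to\bar x$; the goal is $\liminf_k (g\#\omega_{\mu_k})(x^k) \geq g(\bar x)$. We may assume the liminf is finite and, passing to a subsequence, that $(g\#\omega_{\mu_k})(x^k)\to L\in\mathbb{R}$. By Lemma \ref{lem:AuxEpi}(a), the infimum is attained, so pick a minimizer $u^k$ and set $z^k:=x^k-u^k$ to obtain $(g\#\omega_{\mu_k})(x^k) = g(u^k) + \omega_{\mu_k}(z^k)$. The crux is to prove $z^k\to 0$. Granting this, $u^k\to\bar x$, and lsc of $g$ yields $g(\bar x)\leq\liminf_k g(u^k)$. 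Moreover $\omega$ is continuous and tends to $+\infty$ at infinity by 1-coercivity, so $\omega^*:=\inf\omega$ is finite and $\omega_{\mu_k}(z^k)\geq \mu_k\omega^*\to 0$. The inequality $\liminf(a_k+b_k)\geq \liminf a_k + \liminf b_k$ (applicable since neither liminf equals $-\infty$) then gives $L\geq g(\bar x)$, as desired.

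The main obstacle, and where 1-coercivity enters essentially, is establishing $z^k\to 0$. I would argue by contradiction: if $\|z^k\|\not\to 0$, pass to a subsequence along which $\|z^k\|$ is bounded away from $0$ (possibly tending to $\infty$). In either subcase $\|z^k/\mu_k\|\to\infty$, and 1-coercivity of $\omega$ gives
\[
\frac{\omega_{\mu_k}(z^k)}{\|z^k\|} \;=\; \frac{\omega(z^k/\mu_k)}{\|z^k/\mu_k\|} \;\longrightarrow\; +\infty.
\]
Using an affine minorant $g(u)\geq\langle w,u\rangle-c$ of the proper lsc convex function $g$, one estimates $g(u^k)\geq -\|w\|\|x^k\|-\|w\|\|z^k\|-c$, so the $\omega_{\mu_k}(z^k)$-term dominates linearly in $\|z^k\|$ and forces $(g\#\omega_{\mu_k})(x^k)\to +\infty$, contradicting convergence to the finite value $L$. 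Hence $z^k\to 0$ and the proof concludes as above. This argument explains why 1-coercivity (rather than mere 0-coercivity) of $\omega$ is needed when no lower bound on $g$ is assumed: one needs the $\omega_\mu$-term to grow faster than the worst-case linear decrease of $g$ coming from its affine minorant.
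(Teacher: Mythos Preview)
Your proof is correct and shares the paper's core mechanism: for the liminf condition, pick minimizers $u^k$ and use an affine minorant of $g$ together with the 1-coercivity of $\omega$ to force $x^k-u^k\to 0$. The only differences are peripheral: the paper handles the limsup direction via the epi-sum identity $\epi(g\#\omega_\mu)=\epi g+\epi\omega_\mu$ and a set-convergence result from \cite{RoW 98}, and completes the liminf direction by invoking Lemma~\ref{lem:EpiOm}, whereas you use the direct recovery sequence $x^k\equiv\bar x$ and the elementary bound $\omega_{\mu_k}(z^k)\ge\mu_k\inf\omega$, making your argument slightly more self-contained but not conceptually different.
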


\begin{proof} The fact that $\Liminf_{\mu\downarrow 0} \epi g\#\omega_\mu\supseteq \epi g$ follows immediately from \cite[Theorem 4.29 a)]{RoW 98} when applied to the respective epigraphs.
\\
Therefore, it is enough to show that $\Limsup_{\mu\downarrow 0} \epi g\#\omega_\mu\subset \epi g$.
\\
To this end, pick $(\bar x,\bar \alpha)\in \Limsup_{\mu\downarrow 0} \epi g\#\omega_\mu$ arbitrarily. Then there exist sequences $\{\mu_k\}\downarrow 0, \{x^k\}\to\bar x$ and $\alpha_k\to\bar \alpha$ such that 
\begin{equation}\label{eq:Epi1}
 (g\#\omega_{\mu_k})(x^k)\leq \alpha_k\quad \forall k\in\mathbb N.
\end{equation}
With 
$$
u^k\in\argmin_{u\in\mathbb R^n} \Big\{g(u)+\mu_k\omega\Big(\frac{x^k-u}{\mu_k}\Big)\Big\},
$$
 \eqref{eq:Epi1} can be written as
\begin{equation}\label{eq:Epi2}
g(u^k)+\mu_k\omega\Big(\frac{x^k-u^k}{\mu_k}\Big)\leq \alpha_k\quad \forall k\in\mathbb N.
\end{equation}
Using the fact, cf.  \cite[Theorem 9.19]{BaC 11}, that  the convex, lsc function $g$ is minorized by  an affine function, say  $x\mapsto b^Tx+\beta$, 
this leads to 
$$
b^Tu^k+\beta+\mu_k\omega\Big(\frac{x^k-u^k}{\mu_k}\Big)\leq \alpha_k\quad \forall k\in\mathbb N.
$$
If we assume that $\{u^k\}$ does not convergence to $\bar x$, we can rewrite this (for $k$ sufficiently large) as
$$
\frac{\omega\Big(\frac{x^k-u^k}{\mu_k}\Big)}{\|\frac{x^k-u^k}{\mu_k}\|}\leq \frac{\alpha_k-b^Tu^k-\beta}{\|x^k-u^k\|}.
$$ 
Whether $\{u^k\}$ is unbounded or not, we obtain a contradiction, since  the left-hand side tends to $+\infty$, as $\omega$ is 1-coercive, while the right-hand side
remains bounded.

Hence, $\{u^k\}\to\bar x$. We now claim that  $g(u^k)\not\to+\infty$, and hence, in particular,   $\bar x\in\dom g$. If this were not the case, we  invoke  \cite[Theorem 9.19]{BaC 11} again  to get an affine minorant of $\omega$, say $x\mapsto c^Tx+\gamma$, and infer from \eqref{eq:Epi2} that
$$
g(u^k)+c^T(u^k-x^k)+\mu_k\gamma\leq \alpha_k\quad \forall k\in \mathbb N.
$$
This, however, leads to a contradiction if $g(u^k)\to +\infty$ since $c^T(u^k-x^k)+\mu_k\gamma\to 0$ and $\alpha_k\to\bar \alpha<+\infty$. Thus, we have 
shown that $\{g(u^k)\}$ is bounded from above. Since  $g$ is lsc and $u^k\to\bar x$, we also know that $\liminf_{k\to \infty}g(u^k)\geq g(\bar x)$.
Hence, we may as well assume that $g(u^k)\to \hat g\geq g(\bar x)$ and, in particular, we have $\bar x\in\dom g$. 
\\
We now infer from \eqref{eq:Epi2} that 
$$
(x^k-u^k, \alpha_k-g(u^k))\in \epi \omega_{\mu_k}\quad \forall k\in\mathbb N.
$$
Since $x^k-u^k\to 0$ and $\alpha_k-g(u^k)\to \alpha-\hat g$,  Lemma \ref{lem:EpiOm} implies
$$
(0,\bar \alpha-\hat g)\in \Limsup_{\mu\downarrow 0}\epi \omega_\mu\subset\epi \delta (\cdot\mid \{0\}).
$$
This immediately gives 
$$
g(\bar x)\leq \hat g\leq \bar \alpha,
$$ 
i.e., $(\bar x,\bar \alpha)\in\epi g$, which concludes the proof.
\end{proof}

\noindent
We are now in a position to state the main result of this section.

\begin{theorem}\label{th:InfCon}
 If  $\omega$ is 1-coercive then the function $s_g:(x,\mu)\mapsto (g\#\omega_\mu)(x)$ is an 
 epi-smoothing function for $g$ with 
$$
\gph \nabla_x s_g(\cdot,\mu)\underset{\mu\downarrow 0}{\rightarrow}\gph \partial g,
$$
and hence, in particular,
$$
\Limsup_{\mu\downarrow 0,\, x\to\bar x}  \nabla_xs_g(x,\mu)=\partial g(\bar x)\quad \forall \bar x\in\dom g.
$$
\end{theorem}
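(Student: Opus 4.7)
The plan is to split the statement into its constituent parts: verifying the two defining conditions of an epi-smoothing function, establishing graphical convergence of the gradient mappings, and finally deducing the pointwise Limsup formula. All of the heavy lifting has already been done in Lemmas \ref{lem:AuxEpi}, \ref{lem:EpiOm}, and Proposition \ref{prop:EpiLim}, so the proof is primarily a matter of assembly.

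For the first claim, that $s_g$ is an epi-smoothing function, the continuous differentiability of $s_g(\cdot,\mu)=g\#\omega_\mu$ for each fixed $\mu>0$ is exactly Lemma \ref{lem:AuxEpi} c) (which applies thanks to 1-coercivity of $\omega$), while the required epi-convergence $\elim_{\mu\downarrow 0}s_g(\cdot,\mu)=g$ is Proposition \ref{prop:EpiLim}. Together these verify conditions (i) and (ii) of the definition of epi-smoothing function in Section \ref{sec:smooth}.

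The second claim, graphical convergence of $\nabla_x s_g(\cdot,\mu)$ to $\partial g$, is where the convex-analytic machinery is invoked. First I would observe that $s_g(\cdot,\mu)$ is proper, lsc, and convex: properness is immediate from Lemma \ref{lem:AuxEpi} a), lsc-ness follows from the epi-sum identity in Lemma \ref{lem:AuxEpi} b) together with the fact that $\epi g +\epi\omega_\mu$ is closed under 1-coercivity, and convexity is inherited from that of $g$ and $\omega_\mu$. Since $s_g(\cdot,\mu)$ is also continuously differentiable, its convex subdifferential is single-valued and satisfies
$$
\gph\nabla_x s_g(\cdot,\mu)=\gph\partial s_g(\cdot,\mu).
$$
Given any sequence $\{\mu_k\}\downarrow 0$, the epi-convergence $s_g(\cdot,\mu_k)\overset{e}{\to} g$ in the class of proper lsc convex functions triggers Attouch's theorem (\cite[Theorem 12.35]{RoW 98}), which yields
$$
\gph\partial s_g(\cdot,\mu_k)\to\gph\partial g
$$
in the Painlev\'e-Kuratowski sense, and this is exactly the graphical convergence asserted.

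For the final Limsup identity, the inclusion $\partial g(\bar x)\subset\Limsup_{\mu\downarrow 0,\,x\to\bar x}\nabla_x s_g(x,\mu)$ is a direct application of Lemma \ref{lem:Gincl}. The reverse inclusion is extracted from the graphical convergence just established: any $v$ in the Limsup is, by definition, the limit of $\nabla_x s_g(x^k,\mu_k)$ along sequences $x^k\to\bar x$, $\mu_k\downarrow 0$, so $(\bar x,v)\in\Limsup_k \gph\partial s_g(\cdot,\mu_k)\subset\gph\partial g$, whence $v\in\partial g(\bar x)$.

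The main obstacle, and really the only nontrivial step, is the legitimacy of applying Attouch's theorem in the absence of any lower-boundedness hypothesis on $g$; this hinges on knowing that $s_g(\cdot,\mu)$ remains proper, lsc, and convex. That issue has already been dispatched in Lemma \ref{lem:AuxEpi} and Proposition \ref{prop:EpiLim}, so the present theorem is largely a clean consolidation of those earlier results.
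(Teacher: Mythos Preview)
Your proposal is correct and follows essentially the same route as the paper: epi-convergence from Proposition \ref{prop:EpiLim}, smoothness from Lemma \ref{lem:AuxEpi}, and then Attouch's theorem for the graphical convergence, with the Limsup identity read off from that. The only difference is cosmetic---you spell out the verification of properness, lower semicontinuity, and convexity of $s_g(\cdot,\mu)$ needed to invoke Attouch's theorem (though lsc-ness follows more directly from continuous differentiability than from the epi-sum argument), and you make the derivation of the Limsup identity from graph convergence explicit, whereas the paper absorbs both steps into a one-line appeal to Attouch's theorem.
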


\begin{proof}
Due to  Propostion \ref{prop:EpiLim}, we have $\elim_{\mu\downarrow 0} s_g(\cdot,\mu)=\elim_{\mu\downarrow 0} g\#\omega_\mu = g$. The smoothness properties of $\nabla_xs_g(\cdot,\mu)=\nabla g\#\omega_\mu$ follow from
Lemma \ref{lem:AuxEpi}. The remaining assertion is an immediate consequence of {\em Attouch's Theorem}, see
\cite[Theorem 12.35]{RoW 98}. This concludes the proof.
\end{proof}

 \subsection*{Moreau Envelopes} The most prominent choice  for $\omega$ is given by
$$
\omega:=\frac{1}{2}\|\cdot\|^2.
$$
\noindent
The resulting inf-convolution of $\omega_\mu$ with an lsc  function $g:\mathbb R^n\to\mathbb R\cup\{+\infty\}$  is called the {\em Moreau envelope} or {\em Moreau-Yosida regularization} of $g$ and is denoted by  $e_\mu g$, i.e., 
    $$
          e_\mu g(x)=\inf_w\Big\{g(w)+ \frac{1}{2\mu}\|w-x\|^2\Big\}.
       $$
 The set-valued map $P_\mu g:\mathbb R^n\rightrightarrows \mathbb R^n $  
      given by 
      $$
         P_\mu g(x):=\argmin_w\Big\{g(w)+ \frac{1}{2\mu}\|w-x\|^2\Big\}
      $$
      is called the {\em proximal mapping} for $g$.

\noindent
The following properties of Moreau envelopes and proximal mappings of convex 
functions are well known, see \cite{Roc 70,RoW 98} or \cite{HUL 96}.

\begin{proposition}\label{moreau basics}
Let $f:\mathbb R^n\to\mathbb R\cup\{+\infty\}$ be lsc and convex and $\mu >0$. Then the 
following holds:
\begin{itemize}
   \item[a)] $P_\mu f$ is single-valued and Lipschitz continuous.
   \item[b)] $e_\mu f$ is  convex and smooth with Lipschitz gradient 
      $ \nabla e_\mu f$ given by
      $$
         \nabla e_\mu f(x)=\frac{1}{\mu}[x-P_\mu f(x)].
      $$
   \item[c)] $\argmin f=\argmin e_\mu f$.
\end{itemize}
\end{proposition}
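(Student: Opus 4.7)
My plan is to derive all three assertions by specializing the infimal-convolution machinery of this section to $\omega=\tfrac12\|\cdot\|^2$, combined with the monotonicity of the convex subdifferential. A direct computation gives $\omega_\mu(y)=\tfrac{1}{2\mu}\|y\|^2$ and $\nabla\omega_\mu(y)=y/\mu$, so $\omega$ meets the standing hypotheses (A)--(B), is $1$-coercive, and satisfies $\omega(0)=0$; moreover $e_\mu f=f\#\omega_\mu$, and $P_\mu f(x)$ coincides with the $\argmin$ set appearing in Lemma \ref{lem:AuxEpi}. Existence of a minimizer in (a) is then immediate from Lemma \ref{lem:AuxEpi} a), and uniqueness follows because $w\mapsto\tfrac{1}{2\mu}\|w-x\|^2$ is strongly convex while $f$ is convex, so the objective is strictly convex.

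For the Lipschitz assertion in (a) I would exploit the fact that the first-order optimality condition reads $x-P_\mu f(x)\in\mu\,\partial f(P_\mu f(x))$, i.e., $P_\mu f=(I+\mu\partial f)^{-1}$. Setting $u_i:=P_\mu f(x_i)$ for $i=1,2$, the inclusions $x_i-u_i\in\mu\,\partial f(u_i)$ together with monotonicity of $\partial f$ give
$$
0\le\bigl((x_1-u_1)-(x_2-u_2)\bigr)^{T}(u_1-u_2),
$$
which rearranges to $\|u_1-u_2\|^2\le(x_1-x_2)^{T}(u_1-u_2)\le\|x_1-x_2\|\,\|u_1-u_2\|$, yielding $1$-Lipschitz continuity (indeed firm nonexpansiveness). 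Part (b) is then essentially a corollary: Lemma \ref{lem:AuxEpi} c) directly yields
$$
\nabla e_\mu f(x)=\nabla\omega_\mu\bigl(x-P_\mu f(x)\bigr)=\tfrac{1}{\mu}\bigl(x-P_\mu f(x)\bigr),
$$
and the Lipschitz constant of this gradient (at most $2/\mu$ by the triangle inequality) is inherited from that of $P_\mu f$. Convexity of $e_\mu f$ follows from Lemma \ref{lem:AuxEpi} b), since $\epi e_\mu f=\epi f+\epi\omega_\mu$ is a sum of convex sets.

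For (c), my plan is to chain the equivalences
$$
x\in\argmin f\iff 0\in\partial f(x)\iff P_\mu f(x)=x\iff\nabla e_\mu f(x)=0\iff x\in\argmin e_\mu f,
$$
where the second equivalence uses $P_\mu f=(I+\mu\partial f)^{-1}$, the third is the formula from (b), and the last uses convexity of $e_\mu f$ together with the pointwise inequality $e_\mu f\le f$ (from Lemma \ref{lem:Mono}, noting $\omega(0)=0$) which at a point with $P_\mu f(x)=x$ reduces to equality $e_\mu f(x)=f(x)$. The only genuinely nontrivial step is the firm nonexpansiveness computation in (a); the remaining pieces are direct specializations of results already established in this section.
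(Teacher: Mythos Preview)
Your proposal is correct, and it is actually more than what the paper does: the paper treats this proposition as a well-known result and simply cites the standard references \cite{Roc 70,RoW 98,HUL 96} without giving any argument. Your route is therefore genuinely different in character. Rather than deferring to the literature, you derive everything internally by specializing the infimal-convolution lemmas of this section (Lemma~\ref{lem:AuxEpi} for existence, the epigraph sum, and the gradient formula; Lemma~\ref{lem:Mono} for the bound $e_\mu f\le f$) and adding one standard ingredient from convex analysis, the monotonicity of $\partial f$, to obtain the firm nonexpansiveness of $P_\mu f=(I+\mu\partial f)^{-1}$. What this buys you is a self-contained proof that stays inside the paper's framework; what the paper's citation buys is brevity and the sharper Lipschitz constant $1/\mu$ for $\nabla e_\mu f$ (your triangle-inequality estimate of $2/\mu$ is valid but not optimal---this can be tightened using the firm nonexpansiveness inequality $\|u_1-u_2\|^2\le(x_1-x_2)^T(u_1-u_2)$ you already established). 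One minor remark on part~(c): the inequality $e_\mu f\le f$ is not actually needed for the chain of equivalences you wrote down, since $\nabla e_\mu f(x)=0\iff x\in\argmin e_\mu f$ follows from convexity of $e_\mu f$ alone; it does, however, give the pleasant by-product $\min f=\min e_\mu f$.
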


\noindent
In view of item c) it is possible to recover the minimzers of a (possibly 
nonsmooth) convex function by those of its Moreau envelope. Hence,
it is not even necessary to drive the smoothing parameter to zero. 
\\
Since the function $x\mapsto\frac{1}{2}\|x\|^2$ is 1-coercive, the following result can be formulated as a corollary of Theorem \ref{th:InfCon}.

\begin{corollary}\label{cor:Moreau}
Let $g:\mathbb R^n\to\mathbb R\cup\{+\infty\}$ be  lsc and convex. Then $s_g:(x,\mu)\mapsto e_\mu g(x)$ is an epi-smoothing function for $g$ with 
$$
   \Limsup_{\mu\downarrow 0, \, x\to\bar x} \nabla_x s_g(x,\mu) = \partial g(\bar x)\quad \forall \bar x\in \dom g.
$$
\end{corollary}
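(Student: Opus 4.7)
The plan is to present this as a direct application of Theorem \ref{th:InfCon} with the specific choice $\omega=\tfrac12\|\cdot\|^2$, so the bulk of the work is simply verifying the hypotheses of that theorem are met by this $\omega$.

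First I would check assumption (B): the function $\omega(x)=\tfrac12\|x\|^2$ is clearly convex and continuously differentiable with $\nabla\omega(x)=x$, which is $1$-Lipschitz. Next I would verify $1$-coercivity, which is immediate since $\omega(x)/\|x\|=\tfrac12\|x\|\to +\infty$ as $\|x\|\to\infty$. Then I would compute
$$
\omega_\mu(y)=\mu\,\omega\!\left(\tfrac{y}{\mu}\right)=\tfrac{\mu}{2}\left\|\tfrac{y}{\mu}\right\|^2=\tfrac{1}{2\mu}\|y\|^2,
$$
so that by definition
$$
(g\#\omega_\mu)(x)=\inf_{w}\Big\{g(w)+\tfrac{1}{2\mu}\|x-w\|^2\Big\}=e_\mu g(x).
$$
In particular, $s_g(\cdot,\mu)=g\#\omega_\mu$ coincides with the Moreau envelope.

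With these identifications in hand, the conclusions follow by simply invoking Theorem \ref{th:InfCon}: it guarantees that $s_g$ is an epi-smoothing function for $g$ (so $s_g(\cdot,\mu)$ is $C^1$ for each $\mu>0$ and $\elim_{\mu\downarrow 0}s_g(\cdot,\mu)=g$), and further yields the gradient consistency property
$$
\Limsup_{\mu\downarrow 0,\,x\to\bar x}\nabla_x s_g(x,\mu)=\partial g(\bar x)\qquad\forall\,\bar x\in\dom g,
$$
which is precisely the asserted identity. No step is a real obstacle here; the only conceptual point worth flagging is the consistency of notation between $g\#\omega_\mu$ in Theorem \ref{th:InfCon} and the classical $e_\mu g$, which is handled by the one-line computation of $\omega_\mu$ above.
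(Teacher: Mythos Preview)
Your proposal is correct and follows essentially the same approach as the paper: the paper simply notes that $\omega=\tfrac12\|\cdot\|^2$ is 1-coercive and records the corollary as an immediate consequence of Theorem~\ref{th:InfCon}. Your write-up merely makes explicit the verification of assumption~(B) and the identification $g\#\omega_\mu=e_\mu g$, which the paper leaves implicit.
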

When $g$ is lsc and convex, the fact that $e_\mu g$ epi-converges to $g$ as 
$\mu\downarrow 0$ is well known 
(cf. the discussion in \cite{RoW 98} after Proposition 7.4).

 \subsection*{Extended Piecewise Linear-Quadratic Functions (EPLQ) \cite{RoW 98}}
 EPLQ functions play a key role in a
 wide variety of applications, 
 e.g., 
 {\em signal denoising} \cite{CoW 12,Donoho1992}, 
 {\em model selection} \cite{Lasso1996},
 {\em compressed sensing} \cite{Donoho2006,LARS2004,Hastie90}, 
 {\em robust statistics} \cite{Hub 81}, 
{\em  Kalman filtering} \cite{Aravkin2011,AravkinIFAC,Farahmand2011}, and 
 {\em support vector classifiers} \cite{Evgeniou99,Pontil98,Scholkopf00}. 
 Examples include arbitrary gauge functionals 
 \cite{RoW 98} (e.g., norms), 
 the 
{\em Huber penalty} \cite{BeT 12,Hub 81}, 
 the 
 {\em hinge loss function} \cite{Evgeniou99,Pontil98,Scholkopf00}, 
 and the 
 {\em Vapnik penalty} \cite{Hastie01,Vapnik98}.
 For an overview of these functions and their statistical properties see
 \cite{ABP 11,RoW 98}.
 In this section, we show that the Moreau envelope mapping $g\mapsto e_\mu g$ maps the class
 of EPLQ functions to itself in a very natural way.
 
 \begin{definition} 
 The convex function \(\map{g}{\bR^n}{\eR}\) is said to be 
 extended piecewise linear-quadratic
 if for some positive integer \(m\) there exists a nonempty closed convex
 set \(U\subset\bR^m\) (typically polyhedral), an injective matrix $R\in\bR^{n\times m}$, 
 a symmetric and positive semi-definite matrix \(B\in\bR^{m\times m}\),
 and a vector $b\in\bR^m$ such that
 \begin{equation}\label{eplq}
 g(x):=\theta_\eplqs(x):=\sup_{u\in U}\ip{u}{Rx-b}-\half u^TBu .
 \end{equation}
 If \(m=n\), $R=I$, and $b=0$, then $g$ is said to be {\em piecewise linear-quadratic}
 (PLQ).
 \end{definition}
 
 \begin{example}[Examples of EPLQ functions]
 
 \noindent
 \begin{enumerate}
 \item Norms: Let $\snorm{\cdot}$ be a norm with closed unit ball $\bB_*$.
 Then $\snorm{\cdot}=\theta_{\scriptscriptstyle{(\bB_*^\circ,0,I,0)}}$, where
 $\bB_*^\circ:=\set{v}{\ip{v}{u}\le 1\ \forall \, u\in\bB_*}$.
 \item The Huber penalty: Let $\kappa >0$. Then
 $\theta_{{\scriptscriptstyle{([-\kappa,\kappa]^n,I,I,0)}}}$ is the
 Huber penalty with threshold $\kappa$.
 \item The Vapnik penalty: Let $\epsilon >0$ and define
 $U=[0,1]^{2n}, R=[I_{n\times n},\, -I_{n\times n}]^T,$ and 
 $b=\epsilon\, \mathsf{ones}(2n,1)$, then
 $\theta_{{\scriptscriptstyle{(U,0,T,b)}}}$ is the Vapnik penalty with
 threshold $\epsilon$.
 \end{enumerate}
 \end{example}
 
 
 \begin{proposition}
 Let $\theta_\eplqs$ 
 be an extended piecewise linear-quadratic function.
 If $B$ is positive definite or
 $U$ is bounded, then
 \[
 e_\mu \theta_\eplqs=\theta_{\scriptscriptstyle{(U,\hat{B},R,b)}},
 \]
 where $\hat{B}=B+\mu RR^T$.
Moreover, for each $x\in\bR^n$ there exists a saddle-point \( (\bu,\bv)\in U\times \bR^n\)
for the closed proper concave-convex saddle-function \cite[Section 33]{Roc 70}
\[
K(u,v):=\ip{Rv-b}{u}-\half u^TBu+\frac{1}{2\mu}\tnorm{x-v}^2-\delta(u\mid U)
\]
satisfying \(e_\mu g(x)=K(\bu,\bv)\).
 \end{proposition}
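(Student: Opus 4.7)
By definition,
\begin{equation*}
e_\mu \theta_{\eplqs}(x) = \inf_{v \in \bR^n}\left\{\sup_{u\in U}\left[\ip{u}{Rv-b} - \tfrac12 u^T B u\right] + \tfrac{1}{2\mu}\tnorm{x-v}^2\right\} = \inf_v \sup_u K(u,v),
\end{equation*}
with $K$ the saddle-function displayed in the statement. The plan is to swap the order of $\inf_v$ and $\sup_u$ via a saddle-point/minimax result from \cite[Section 37]{Roc 70}, then compute the (now inner) infimum in closed form and recognize the result as an EPLQ function with modified matrix $\hat B := B + \mu RR^T$. Note that $K$ is upper-semicontinuous and concave in $u$, lower-semicontinuous and convex in $v$, and is closed proper as a saddle-function since each of its constituent pieces is so.

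The first task is to verify hypotheses for the existence of a saddle point. Coercivity in $v$ is automatic: for any fixed $u\in U$, $K(u,\cdot)$ is a strictly convex quadratic with strong-convexity parameter $1/\mu$, so $\argmin_v K(u,v) = \{x - \mu R^T u\}$ is a singleton. For the $u$-side we split on the stated dichotomy. If $U$ is bounded, then $U$ is compact and the usc concave function $K(\cdot,v)$ attains its supremum on $U$. If $B$ is positive definite, then $K(\cdot,v)$ is a strongly concave quadratic on $\bR^m$ (prior to adding the indicator of $U$), and hence attains its supremum over the closed convex set $U$ as well. In either case, \cite[Theorem 37.3]{Roc 70} delivers a saddle point $(\bu,\bv) \in U \times \bR^n$ satisfying
\begin{equation*}
K(\bu,\bv) \;=\; \inf_v \sup_u K(u,v) \;=\; \sup_u \inf_v K(u,v).
\end{equation*}

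Finally, for each fixed $u\in U$, substituting the unique minimizer $v^*(u)=x-\mu R^T u$ of $K(u,\cdot)$ into the definition of $K$ and simplifying yields
\begin{equation*}
\inf_v K(u,v) \;=\; \ip{u}{Rx-b} - \tfrac12 u^T(B+\mu RR^T)u \;=\; \ip{u}{Rx-b}-\tfrac12 u^T\hat B u.
\end{equation*}
Taking the supremum over $u\in U$ recovers $\theta_{\scriptscriptstyle{(U,\hat B,R,b)}}(x)$, which combined with the swap gives $e_\mu\theta_{\eplqs}=\theta_{\scriptscriptstyle{(U,\hat B,R,b)}}$ and $e_\mu\theta_{\eplqs}(x)=K(\bu,\bv)$. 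The one nontrivial obstacle is choosing a saddle-point existence theorem that handles both dichotomy cases: since $U$ need not be bounded when $B$ is pd, nor $B$ pd when $U$ is bounded, the cleanest route is either to split into two cases explicitly or to invoke a general result such as \cite[Theorem 37.3]{Roc 70} whose level-boundedness-in-the-saddle-sense hypotheses can be verified in each case.
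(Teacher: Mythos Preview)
Your proposal is correct and follows essentially the same route as the paper: recognize $e_\mu\theta_{\eplqs}(x)=\inf_v\sup_u K(u,v)$, invoke a saddle-point existence theorem from \cite[Section~37]{Roc 70} using coercivity of $K$ in $v$ and (under either hypothesis) coercivity of $-K$ in $u$, then carry out the inner minimization in $v$ explicitly to obtain $\theta_{\scriptscriptstyle{(U,\hat B,R,b)}}$. The only discrepancy is the citation: the paper invokes \cite[Theorem~37.6]{Roc 70}, which is the result tailored to the unbounded/coercive setting, whereas \cite[Theorem~37.3]{Roc 70} is the compact-domain version and does not directly cover the case $U$ unbounded, $B$ positive definite (nor the always-unbounded $v$-domain); your own closing remark already flags this issue.
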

 
 \begin{proof}
 Regardless of the choice of $x$, 
 $K$ is coercive  in $v$ for each $u\in U$, and if 
 $B$ is positive definite or
 $U$ is bounded, then 
 $-K$ is coercive in $u$ for each $v\in \bR^n$. 
 Hence, by \cite[Theorem 37.6]{Roc 70}, for every $x\in\bR^n$, $K$ has a 
 saddle-point \( (\bu,\bv)\in U\times \bR^n\) satisfying
 \begin{eqnarray*}
 e_\mu g(x)&=&\inf_{v\in\bR^n}\sup_{{u\in U}}K(u,v)\\
 &=&K(\bu,\bv)\\
 &=&\sup_{{u\in U}}\inf_{v\in\bR^n}K(u,v) .
 \end{eqnarray*}
 To complete the proof observe that the problem
 \[
 \inf_{v\in\bR^n}K(u,v)=-\left[\ip{b}{u}+\half u^TBu\right]+
 \inf_{v\in\bR^n}\left[\ip{v}{R^Tu}+\frac{1}{2\mu}\tnorm{x-v}^2\right]
 \]
 has a unique solution at $v(x,u)=x-\mu R^Tu$. Plugging this solution into $K$ gives
 \(
 e_\mu g(x)=\sup_{u\in U}K(u,v(x,u))=\theta_{\scriptscriptstyle{(U,\hat{B},R,b)}}(x).
 \)
 \end{proof}

\begin{example}[Lasso-Problem]\label{ex:Lasso}
Given $A \in\mathbb R^{m\times n}$ and $b\in\mathbb R^m$ 
with $m<<n$,  consider the nonsmooth optimization problem
\begin{equation}\label{eq:Lasso}
   \min_x f(x):=\frac{1}{2}\|Ax-b\|^2+\lambda \|x\|_1,
\end{equation}
where $\lambda>0$. This problem is known in the literature as the 
{\em Lasso-Problem}, see \cite{LARS2004,Lasso1996}. 

The objective function $f$ is the sum of two convex functions,
one is smooth and the other is a nonsmooth PLQ function. 
By Proposition \ref{prop:SmoothCalc},
an epi-smoothing function for $f$ can be obtained by computing the
Moreau envelope for the $1$-norm. This envelope is obtained from
the proximal mapping which in this case is commonly referred to 
in the literature as
\emph{soft thresholding} \cite{CoW 12,Donoho1992}.
An easy 
computation shows that 
$$
   P_\mu \|\cdot\|_1(x)=\left\{\begin{array}{ccl}
   x_i+\mu & {\rm if} & x_i<-\mu,\\
   x_i-\mu & {\rm if} & x_i>\mu,\\
   0 & {if} & |x_i|\leq \mu.
   \end{array}\right.
$$
\end{example}

\section{Convex Composite Functions}\label{sec:ConvCom}

\noindent 
An important and powerful class  of nonsmooth, nonconvex functions   $f:\mathbb R^n\to\mathbb R\cup\{+\infty\}$ is given by  
\begin{equation}\label{eq:ConvCom}
 f(x):=g(H(x))\quad\forall x\in\mathbb R^n,
\end{equation}
 where $g:\mathbb R^m\to\mathbb R\cup\{+\infty\}$  is lsc and convex and $H:\mathbb R^n\to\mathbb R^m$  (twice) continuously differentiable. These functions go by the name {\em convex composite}, see, e.g., \cite{Bur 85, Bur 87} or \cite{BuP 92}, and  are  closely related to  {\em amenable} functions, see \cite[Definition 10.32]{RoW 98}.
\\
Suppose one has an epi-smoothing function $s_g$ of $g$, then it is a natural 
question to ask whether $s_f(\cdot,\cdot):=s_g(H(\cdot),\cdot)$ is 
an epi-smoothing function of $f$. That is, do the smoothing properties of $s_g$ 
(with respect to $g$)  carry over to smoothing properties of $s_f$ (with respect to $f$)?  
In particular, does the 
epi-convergence of $s_g(\cdot,\mu)$ to $g$ imply the epi-convergence
of  $s_f(\cdot,\mu)$ to $f$\;? To clarify this connection, 
we start with an easy observation for which we give a self-contained proof
(an alternative proof can be obtained by applying \cite[Formula 4(8)]{RoW 98} to the respective epigraphs and the function 
$F(x,\alpha):=(H(x),\alpha)$ satisfying 
$\epi f=F^{-1}(\epi g)$).

\begin{lemma}\label{lem:fEpiLimSup}
 Let  $s_g$ be an epi-smoothing function for $g$, and define  
 $s_f(\cdot,\cdot):=s_g(H(\cdot),\cdot)$. Then
$$
\Limsup_{\mu \downarrow 0}\epi s_f(\cdot,\mu)\subset \epi f.
$$
\end{lemma}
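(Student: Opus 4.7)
The plan is to unwind the definition of the outer limit of the epigraphs and reduce the claim to the corresponding (already known) inclusion for $s_g$ itself, using only continuity of $H$.

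First, I would pick an arbitrary $(\bar x,\bar\alpha)\in\Limsup_{\mu\downarrow 0}\epi s_f(\cdot,\mu)$ and extract sequences $\{\mu_k\}\downarrow 0$, $\{x^k\}\to\bar x$, $\{\alpha_k\}\to\bar\alpha$ with $s_f(x^k,\mu_k)\le\alpha_k$, i.e., $s_g(H(x^k),\mu_k)\le\alpha_k$ for all $k$. Setting $y^k:=H(x^k)$ and $\bar y:=H(\bar x)$, continuity of $H$ yields $y^k\to\bar y$, while $(y^k,\alpha_k)\in\epi s_g(\cdot,\mu_k)$ by the above inequality.

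Next I would invoke the epi-convergence assumption on $s_g$: since $s_g(\cdot,\mu)\overset{e}{\to} g$ as $\mu\downarrow 0$, the Painlev\'e--Kuratowski outer-limit inclusion gives
$$
\Limsup_{\mu\downarrow 0}\epi s_g(\cdot,\mu)\subset \epi g .
$$
Applying this to the sequence $(y^k,\alpha_k)\to(\bar y,\bar\alpha)$ shows $(\bar y,\bar\alpha)\in\epi g$, i.e., $g(\bar y)\le\bar\alpha$. Translating back via $\bar y=H(\bar x)$ gives $f(\bar x)=g(H(\bar x))\le\bar\alpha$, hence $(\bar x,\bar\alpha)\in\epi f$, which is what we wanted.

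There is no real obstacle here: the argument is essentially a one-liner that only uses (i) continuity of $H$ to pull the outer limit through the inner map, and (ii) the ``easy half'' of epi-convergence (the $\liminf$-inequality, phrased as an epigraphical $\Limsup$ inclusion). Note that metric regularity of $F=H$, which was needed in Theorem~\ref{th:SmoothingChain} to recover the full epi-limit, is not required for this one-sided inclusion -- that is precisely why the lemma holds for arbitrary smooth $H$. The alternative route sketched in the paper, viewing $\epi f=F^{-1}(\epi g)$ with $F(x,\alpha):=(H(x),\alpha)$ and invoking \cite[Formula 4(8)]{RoW 98}, is just a repackaging of the same continuity argument.
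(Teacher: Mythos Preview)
Your proposal is correct and essentially identical to the paper's own proof: both pick $(\bar x,\bar\alpha)$ in the outer limit, pass to $(H(x^k),\alpha_k)\in\epi s_g(\cdot,\mu_k)$, use continuity of $H$ to get $(H(x^k),\alpha_k)\to(H(\bar x),\bar\alpha)$, and conclude $(H(\bar x),\bar\alpha)\in\epi g$ from the epi-convergence of $s_g(\cdot,\mu)$ to $g$. Your remark that only continuity of $H$ is needed here matches the paper's observation immediately following the lemma.
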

\begin{proof} 
 Let $(\bar x,\bar \alpha)\in\Limsup_{\mu \downarrow 0}\epi s_f(\cdot,\mu)$. Then there exist sequences $\{x^k\}\to\bar x, \{\alpha_k\}\to\bar \alpha$ and $\{\mu_k\}\downarrow 0$ such that 
$$
s_g(H(x^k),\mu_k)\leq \alpha_k\quad \forall k\in\mathbb N,
$$
i.e.,
$$
(H(x^k),\alpha_k)\in\epi s_g(\cdot,\mu_k)\quad \forall k\in\mathbb N.
$$
Since $(H(x^k),\alpha^k)\to (H(\bar x),\bar \alpha)$ we get from the epi-convergence of $s_g(\cdot,\mu)$ to $g$ that
$$
(H(\bar x),\bar \alpha)\in\epi g,
$$
which immediately yields 
$$
(\bar x,\bar \alpha)\in\epi f.
$$
This proves the result.
\end{proof}

\noindent
We  point out that in the previous result, as well as in the following two results, only continuity of $H$ and no smoothness assumption is needed. 

\begin{proposition}\label{prop:fEpiLim}
Let  $s_g$ be an epi-smoothing function for $g$ such that $s_g(y,\mu) $  is nondecreasing as $ \mu\downarrow 0$  for all $y\in\mathbb R^m$. Then for 
$s_f(\cdot,\cdot):=s_g(H(\cdot),\cdot)$ we have 
$$
\elim_{\mu_\downarrow 0} s_f(\cdot,\mu) =f.
$$

\end{proposition}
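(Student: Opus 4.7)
The plan is to verify both halves of the characterization \eqref{eq:EpiChar} for the sequence $s_f(\cdot,\mu_k)=s_g(H(\cdot),\mu_k)$, where $\{\mu_k\}\downarrow 0$ is arbitrary. The liminf inequality is already hidden in Lemma \ref{lem:fEpiLimSup}: given $\{x^k\}\to\bar x$ and setting $\alpha^*:=\liminf_k s_f(x^k,\mu_k)$, a subsequence along which $s_f(x^{k_j},\mu_{k_j})\to\alpha^*$ places the pair $(\bar x,\alpha^*)$ in $\Limsup_{\mu\downarrow 0}\epi s_f(\cdot,\mu)\subset\epi f$, so $\alpha^*\geq f(\bar x)$. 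This handles one direction of \eqref{eq:EpiChar} without any monotonicity assumption.

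For the recovery sequence, the monotonicity assumption lets me pick the trivial constant sequence $x^k\equiv\bar x$, which reduces the task to proving $\limsup_k s_g(\bar y,\mu_k)\leq g(\bar y)$ with $\bar y:=H(\bar x)$. Since $s_g(y,\mu)$ is nondecreasing as $\mu\downarrow 0$ for each fixed $y$, the sequence of functions $\{s_g(\cdot,\mu_k)\}$ is pointwise nondecreasing in $k$, and each member is continuous (hence lsc) by the definition of an epi-smoothing function. I would now invoke the classical fact (e.g.\ in the spirit of \cite[Chapter 7]{RoW 98}) that a nondecreasing sequence of lsc functions epi-converges to its pointwise supremum, which is itself lsc. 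This yields
\[
s_g(\cdot,\mu_k)\overset{e}{\to}\sup_k s_g(\cdot,\mu_k)=\lim_k s_g(\cdot,\mu_k),
\]
and by the uniqueness of epi-limits combined with the hypothesis $s_g(\cdot,\mu_k)\overset{e}{\to} g$ this pointwise limit must equal $g$. Therefore $s_f(\bar x,\mu_k)=s_g(\bar y,\mu_k)\to g(\bar y)=f(\bar x)$, so the constant sequence $x^k\equiv\bar x$ is indeed a recovery sequence.

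The main obstacle is the monotone epi-convergence fact above; should it not be cited directly, it admits a short self-contained justification: for the liminf half take any $x^k\to\bar x$ and observe $\liminf_k s_g(x^k,\mu_k)\geq\liminf_k s_g(x^k,\mu_j)\geq s_g(\bar x,\mu_j)$ by lsc of $s_g(\cdot,\mu_j)$ for each fixed $j$, and then take $\sup_j$; the limsup half is automatic along the constant sequence $x^k\equiv\bar x$, which gives a nondecreasing numerical sequence converging to its supremum. It is precisely the lsc of each $s_g(\cdot,\mu_k)$, paired with the monotonicity, that pins down the epi-limit as the pointwise supremum and thereby converts epi-convergence of $s_g(\cdot,\mu_k)$ into the pointwise convergence needed to push the recovery sequence through the (only continuous) map $H$.
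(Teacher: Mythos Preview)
Your proof is correct and follows essentially the same strategy as the paper: invoke Lemma~\ref{lem:fEpiLimSup} for one direction, and use the constant recovery sequence $x^k\equiv\bar x$ together with monotonicity for the other. In fact your argument is more careful than the paper's, which simply writes ``in view of the monotonicity assumption we get $s_g(H(\bar x),\mu_k)\leq\bar\alpha$'' without further comment; you supply the missing justification for $s_g(y,\mu)\leq g(y)$ by identifying the pointwise supremum of the nondecreasing lsc family with its epi-limit $g$.
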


\begin {proof}
Due to Lemma \ref{lem:fEpiLimSup}, it suffices to show that 
$$
\Liminf_{\mu \downarrow 0}\epi s_f(\cdot,\mu)\supseteq \epi f.
$$
To this end, let $(\bar x,\bar \alpha)\in \epi f$, i.e., $g(H(\bar x))\leq \bar \alpha$. Now, let $\{\mu_k\}\downarrow 0$ be given. In view of the monotonicity assumption
we get $s_g(H(\bar x),\mu_k)\leq \bar \alpha$ and hence
$$
(\bar x, \bar \alpha)\in \epi s_f(\cdot,\mu_k)\quad \forall k\in\mathbb N.
$$
With the choice $x^k:=\bar x$ and $\alpha_k:=\bar \alpha$ it follows immediately that 
$$
(\bar x,\bar \alpha)\in\Liminf_{\mu \downarrow 0}\epi s_f(\cdot,\mu),
$$
which concludes the proof.
\end {proof}

\begin{corollary} \label{cor:smooth}
If, in the setting of Section \ref{sec:InfConv}, $\omega$ is 1-coercive with $\omega(0)\leq 0$, then for  $s_g(\cdot,\mu):=g\#\omega_\mu$ 
we have
$$
\elim_{\mu\downarrow 0}s_g(H(\cdot),\mu) =g\circ H.
$$
\end{corollary}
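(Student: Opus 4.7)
The plan is to verify that the two hypotheses of Proposition \ref{prop:fEpiLim} are met by $s_g(\cdot,\mu):=g\#\omega_\mu$, and then simply invoke that proposition with $f:=g\circ H$.

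First, I would check that $s_g$ is indeed an epi-smoothing function for $g$. This is immediate from Theorem \ref{th:InfCon}, whose sole assumption on $\omega$ is 1-coercivity, which is part of our hypothesis. This step requires no work beyond citation.

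Second, I would verify the monotonicity hypothesis of Proposition \ref{prop:fEpiLim}, namely that for each fixed $y\in\mathbb R^m$ the map $\mu\mapsto s_g(y,\mu)=(g\#\omega_\mu)(y)$ is nondecreasing as $\mu\downarrow 0$. This is precisely the content of Lemma \ref{lem:Mono}, which requires exactly the assumption $\omega(0)\leq 0$ that appears in the corollary's hypotheses. So again, no new computation is needed.

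With both hypotheses verified, Proposition \ref{prop:fEpiLim} applies directly to the composition $s_f(\cdot,\mu):=s_g(H(\cdot),\mu)$, yielding
$$
\elim_{\mu\downarrow 0} s_g(H(\cdot),\mu) \;=\; g\circ H,
$$
which is the desired conclusion. Since the whole argument is a short assembly of three previously established results, there is essentially no obstacle; the only modest subtlety is ensuring that the two hypotheses in the corollary ($\omega$ is 1-coercive and $\omega(0)\leq 0$) line up exactly with the hypotheses of the lemmas being cited (Theorem \ref{th:InfCon} needs only 1-coercivity, while Lemma \ref{lem:Mono} needs only $\omega(0)\leq 0$), which they do. Note that no smoothness on $H$ is used — only continuity — consistent with the remark preceding Proposition \ref{prop:fEpiLim}.
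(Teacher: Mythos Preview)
Your proof is correct and follows essentially the same route as the paper, which simply cites Lemma \ref{lem:Mono} and Proposition \ref{prop:fEpiLim}. Your version is slightly more explicit in also invoking Theorem \ref{th:InfCon} to confirm that $s_g$ is an epi-smoothing function for $g$, which is indeed a hypothesis of Proposition \ref{prop:fEpiLim}.
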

\begin{proof}
 The assertion follows immediately from Lemma \ref{lem:Mono} and Proposition \ref{prop:fEpiLim}.
\end{proof}

\noindent 
In the following result we employ the  limiting normal cone for a (nonempty) convex set $C\subset \mathbb R^n$ at $\bar x\in C$, which is given by, cf. \cite[Theorem 6.9]{RoW 98},
$$
N(\bar x\mid C)=\{v\in\mathbb R^n\mid v^T(x-\bar x)\leq 0\quad \forall x\in C\}.
$$
In our setting, $C$ is the domain of an lsc, convex function $g:\mathbb R^n\to\mathbb R\cup\{+\infty\}$, which is closed and convex.

\begin{lemma}\label{lem:Bound}
 Let $\{g_k\}$ be a sequence of lsc, convex functions $g_k:\mathbb R^m\to\mathbb R\cup\{+\infty\}$ converging epi-graphically to $g:\mathbb R^m\to\mathbb R\cup\{+\infty\}$. Furthermore, let $\{z^k\}$ be an unbounded sequence such that $z^k\in\partial g_k(y^k)$ for all $k\in\mathbb N$ for some $\{y^k\}\to\bar y\in\dom g$. 
Then every accumulation point of $\Big\{\frac{z^k}{\|z^k\|}\Big\}$ lies in $N(\bar y\mid \dom g)$.
 \end{lemma}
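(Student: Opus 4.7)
The plan is to unwind the definitions and exploit the convex subgradient inequality together with the two halves of the epi-convergence characterization \eqref{eq:EpiChar}. Let $\bar z$ be an accumulation point of $\bigl\{z^k/\norm{z^k}\bigr\}$; passing to a subsequence (without relabeling) we may assume $z^k/\norm{z^k}\to\bar z$. Fix an arbitrary $y\in\dom g$. It suffices to show
\[
\bar z^T(y-\bar y)\le 0 ,
\]
since $y\in\dom g$ was arbitrary and $N(\bar y\mid\dom g)=\{v\mid v^T(y-\bar y)\le 0\ \forall\, y\in\dom g\}$.

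Using $g_k\overset{e}{\to}g$, I would apply the \emph{limsup-half} of \eqref{eq:EpiChar} at $y$ to obtain a recovery sequence $\{\tilde y^k\}\to y$ with $\limsup_k g_k(\tilde y^k)\le g(y)<+\infty$. Since $z^k\in\partial g_k(y^k)$ and $g_k$ is convex, the subgradient inequality
\[
g_k(\tilde y^k)\ \ge\ g_k(y^k)+(z^k)^T(\tilde y^k-y^k)
\]
holds for every $k$. Dividing by $\norm{z^k}>0$ and rearranging gives
\[
\left(\frac{z^k}{\norm{z^k}}\right)^T(\tilde y^k-y^k)\ \le\ \frac{g_k(\tilde y^k)-g_k(y^k)}{\norm{z^k}}.
\]
The left-hand side converges to $\bar z^T(y-\bar y)$, so the task reduces to bounding the limsup of the right-hand side by $0$.

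For the numerator I would argue as follows. The \emph{liminf-half} of \eqref{eq:EpiChar} applied to $\{y^k\}\to\bar y$ yields $\liminf_k g_k(y^k)\ge g(\bar y)$, and since $g$ is proper lsc convex and $\bar y\in\dom g$ one has $g(\bar y)\in\bR$. Hence there exists $c\in\bR$ such that $g_k(y^k)\ge c$ for all sufficiently large $k$. Combined with $\limsup_k g_k(\tilde y^k)\le g(y)<+\infty$, this shows that $g_k(\tilde y^k)-g_k(y^k)$ is bounded from above by a finite constant for large $k$. Dividing by $\norm{z^k}\to+\infty$ and passing to the limsup then gives
\[
\bar z^T(y-\bar y)\ \le\ \limsup_{k\to\infty}\frac{g_k(\tilde y^k)-g_k(y^k)}{\norm{z^k}}\ \le\ 0,
\]
which is exactly what we needed.

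The only subtle point is the one-sided bound on $g_k(y^k)$: a priori this sequence could wander (e.g.\ to $+\infty$), but since we divide by a quantity tending to $+\infty$ we only need a lower bound, which is supplied for free by the liminf-half of epi-convergence together with properness of $g$ at the point $\bar y\in\dom g$. No smoothness of $H$ or structural assumption on $g_k$ beyond convexity and lsc is invoked, consistent with the surrounding development.
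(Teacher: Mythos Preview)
Your proof is correct and follows essentially the same route as the paper's: pass to a subsequence, fix $y\in\dom g$, pick a recovery sequence via the $\limsup$-half of \eqref{eq:EpiChar}, apply the convex subgradient inequality, divide by $\norm{z^k}$, and bound the resulting quotient. The only difference is that you spell out explicitly why the numerator $g_k(\tilde y^k)-g_k(y^k)$ is bounded above---using the $\liminf$-half of \eqref{eq:EpiChar} at $\bar y$ to get a finite lower bound on $g_k(y^k)$---whereas the paper compresses this into the single phrase ``due to the choice of $\{\hat y^k\}$ and \eqref{eq:EpiChar}.''
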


\begin{proof} Let $\bar z$ be an accumulation point of $\Big\{\frac{z^k}{\|z^k\|}\Big\}$. W.l.g. we can assume that $\frac{z^k}{\|z^k\|}\to\bar z$. Moreover,  let $y\in\dom g$ be given. Since $\elim_{k\to\infty} g_k=g$, we may invoke \eqref{eq:EpiChar}
to obtain a sequence $\{\hat y^k\}\to y$ such that $\limsup_{k\to\infty} g_k(\hat y^k)\leq g(y)$.  Since, by assumption,  $z^k\in\partial g(y^k)$ for all $k\in\mathbb N$, we infer
$$
g_k(\hat y^k)-g_k(y^k)\geq (z^k)^T(\hat y^k-y^k)\quad\forall k\in\mathbb N.
$$ 
Dividing by $\|z^k\|$ yields 
$$
\frac{g_k(\hat y^k)-g_k(y^k)}{\|z^k\|}\geq \frac{(z^k)^T}{\|z^k\|}(\hat y^k-y^k)\to \bar z^T(y-\bar y).
$$
To prove the assertion it suffices to see that the numerator of the left-hand side of the above inequality is bounded from above at least on a subsequence. 
This, however, is true due to the choice of $\{\hat y^k\}$ and \eqref{eq:EpiChar}.
\end{proof}

\noindent 
A standard assumption in  the context of convex composite functions, cf. \cite{BuP 92}, is the  {\em basic contstraint qualification} which is formally stated in the following definition.

\begin{definition}[Basic constraint qualification]\label{def:BCQ} Let $f$ be given as in \eqref{eq:ConvCom}. Then $f$ is said to satisfy the {\em basic constraint qualification (BCQ)} at a point $\bar x\in \dom f$ if
$$
N( H(\bar x)\mid \dom g)\cap \nul H'(\bar x)^T=\{0\}.
$$
\end{definition}

\noindent
Note that, in the setting of \eqref{eq:ConvCom}, BCQ always holds at a point $\bar x\in\dom f$ where $H'(\bar x)^T$ has full column rank. Moreover, BCQ is always 
fulfilled when $g$ is finite-valued, since then $\dom g=\mathbb R^m$ and thus, 
$N(H(\bar x)\mid \dom g)=\{0\}$ for all $\bar x\in \mathbb R^n$. 
\\
The BCQ is important since it guarantees a rich subdifferential calculus for the composition $f=g\circ H$.

\begin{lemma}\cite[Theorem 10.6]{RoW 98}\label{lem:Chain}  
Let $f$ be given as in \eqref{eq:ConvCom}. If BCQ is satisfied at  $\bar x\in\dom f$,  
then $f$ is (subdifferentially) regular at   
$\bar x$ and we have 
$$
\p f(\bar x)=H'(\bar x)^T\partial g(H(\bar x)).
$$
\end{lemma}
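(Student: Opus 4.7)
The natural approach is to invoke the general nonsmooth chain rule from variational analysis, namely \cite[Theorem 10.6]{RoW 98} itself. That theorem states: if $H$ is $\mathcal{C}^1$ and $g$ is proper lsc, and the constraint qualification
$$\partial^\infty g(H(\bar x)) \cap \nul H'(\bar x)^T = \{0\}$$
holds (where $\partial^\infty g$ is the horizon/singular subdifferential), then $\partial(g\circ H)(\bar x) \subset H'(\bar x)^T\,\partial g(H(\bar x))$, with equality and regularity of $g\circ H$ at $\bar x$ as soon as $g$ is regular at $H(\bar x)$. So my plan has three steps: reduce BCQ to the general constraint qualification, invoke regularity of convex functions, and apply the theorem.

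For step one, I would recall (e.g.\ from \cite[Proposition 8.12]{RoW 98}) that for a proper lsc convex function $g$ and any $\bar y\in\dom g$ one has $\partial^\infty g(\bar y) = N(\bar y \mid \dom g)$; this is essentially because the horizon cone of $\partial g(\bar y)$ coincides with the normal cone to $\dom g$ at $\bar y$ in the convex setting. Substituting, the stated BCQ
$$N(H(\bar x)\mid \dom g) \cap \nul H'(\bar x)^T = \{0\}$$
is exactly the constraint qualification required by \cite[Theorem 10.6]{RoW 98}. For step two, the discussion after Definition \ref{def:subdiff} already records that convex lsc functions are subdifferentially regular at every point of their domain (their regular and limiting subdifferentials both agree with the convex subdifferential), so the regularity hypothesis is automatic. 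Combining these yields both the equality and the regularity of $f$ at $\bar x$.

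If one wanted a self-contained proof avoiding a direct appeal to \cite{RoW 98}, the main obstacle would be the nontrivial inclusion $\partial f(\bar x) \subset H'(\bar x)^T\partial g(H(\bar x))$. The reverse inclusion is immediate from the convex subgradient inequality applied to $g$ composed with a first-order expansion of $H$, and it actually gives elements of $\hat\partial f(\bar x)$, which is how regularity would drop out. For the forward inclusion, one picks $v\in\partial f(\bar x)$ with sequences $x^k\to_f \bar x$ and $v^k\to v$, $v^k\in\hat\partial f(x^k)$; a direct chain rule for the regular subdifferential produces $z^k\in\partial g(H(x^k))$ with $v^k = H'(x^k)^T z^k$, and the task reduces to showing $\{z^k\}$ is bounded. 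Here Lemma \ref{lem:Bound} is custom-built for the job: if $\{z^k\}$ were unbounded, any accumulation point of $\{z^k/\|z^k\|\}$ would lie in $N(H(\bar x)\mid \dom g)$ and simultaneously (since $v^k$ stays bounded) in $\nul H'(\bar x)^T$, contradicting BCQ. Then $z^k$ converges along a subsequence to some $z\in \partial g(H(\bar x))$ by closedness of the convex subdifferential graph, and passing to the limit gives $v = H'(\bar x)^T z$.
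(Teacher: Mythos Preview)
Your proposal is correct and matches the paper exactly: the paper gives no proof of this lemma at all, simply citing \cite[Theorem~10.6]{RoW 98}, and your first two paragraphs supply precisely the verification (that $\partial^\infty g(H(\bar x))=N(H(\bar x)\mid\dom g)$ for proper lsc convex $g$, so BCQ is the required constraint qualification, and that convex $g$ is regular) needed to invoke that theorem. One small caveat on your optional self-contained sketch: the step where you extract $z^k\in\partial g(H(x^k))$ with $v^k=H'(x^k)^Tz^k$ from $v^k\in\hat\partial f(x^k)$ is itself a nontrivial inclusion that essentially presupposes the chain rule at nearby points, so that alternative route would need more care.
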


\begin{theorem}\label{th:ConvComp} 
Let $s_g$ be an epi-smoothing function for $g$. 
If $s_f(\cdot,\cdot):=s_g(H(\cdot),\cdot)$ is an epi-smoothing function  for $f:=g\circ H$, then   
$$
\Limsup_{\mu\downarrow 0, x\to\bar x}\nabla_xs_f(x,\mu)=\p f(\bar x)
$$
for all $\bar x\in \dom f$ at which the BCQ holds.
\end{theorem}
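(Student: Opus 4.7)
The strategy is to establish the two inclusions separately. The inclusion
$\partial f(\bar x)\subset \Limsup_{\mu\downarrow 0,\, x\to\bar x}\nabla_x s_f(x,\mu)$
is free: since $s_f$ is assumed to be an epi-smoothing function for $f$, Lemma \ref{lem:Gincl} gives it directly. All the work lies in the reverse inclusion, which I would attack via the chain rule together with Lemma \ref{lem:Bound} and Lemma \ref{lem:Chain}.

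Take $v\in\Limsup_{\mu\downarrow 0,\, x\to\bar x}\nabla_x s_f(x,\mu)$, so there exist $\mu_k\downarrow 0$ and $x^k\to\bar x$ with $\nabla_x s_f(x^k,\mu_k)\to v$. Since $s_g(\cdot,\mu)$ and $H$ are both $C^1$, the classical chain rule yields
$$
\nabla_x s_f(x^k,\mu_k)=H'(x^k)^T z^k,\qquad z^k:=\nabla_y s_g(H(x^k),\mu_k).
$$
Setting $y^k:=H(x^k)\to\bar y:=H(\bar x)$ (note $\bar y\in\dom g$ because $\bar x\in\dom f$), we have $H'(x^k)^T z^k\to v$, and the problem reduces to tracking $\{z^k\}$.

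The crux of the proof is showing that $\{z^k\}$ is bounded; this is where BCQ enters and, in my view, constitutes the main obstacle. Suppose towards a contradiction that $\|z^k\|\to\infty$ along a subsequence, and extract a further subsequence with $z^k/\|z^k\|\to\hat z$, $\|\hat z\|=1$. Since $\|H'(x^k)^T z^k\|\to\|v\|$ is bounded, dividing by $\|z^k\|\to\infty$ and invoking continuity of $H'$ yields
$$
H'(\bar x)^T\hat z=\lim_{k\to\infty}H'(x^k)^T\frac{z^k}{\|z^k\|}=0,\qquad\text{i.e.,}\quad \hat z\in\nul H'(\bar x)^T.
$$
On the other hand, each $z^k=\nabla_y s_g(y^k,\mu_k)\in\partial s_g(\cdot,\mu_k)(y^k)$, and $s_g(\cdot,\mu_k)\overset{e}{\to} g$ as lsc convex functions with $y^k\to\bar y\in\dom g$; Lemma \ref{lem:Bound} then places $\hat z\in N(\bar y\mid\dom g)=N(H(\bar x)\mid\dom g)$. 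Together, $\hat z\in N(H(\bar x)\mid\dom g)\cap\nul H'(\bar x)^T=\{0\}$ by the BCQ at $\bar x$, which contradicts $\|\hat z\|=1$.

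Having established boundedness, pass to a subsequence with $z^k\to\bar z$. By Attouch's theorem \cite[Theorem 12.35]{RoW 98}, applied to the convex epi-convergent sequence $s_g(\cdot,\mu_k)\overset{e}{\to}g$, graph convergence of the subdifferentials yields $\bar z\in\partial g(\bar y)=\partial g(H(\bar x))$. Passing to the limit in $H'(x^k)^T z^k\to v$ then gives $v=H'(\bar x)^T\bar z$, and Lemma \ref{lem:Chain} (whose hypothesis is exactly the BCQ assumed here) identifies
$$
v\in H'(\bar x)^T\partial g(H(\bar x))=\partial f(\bar x),
$$
completing the reverse inclusion. The only genuinely non-routine step is the boundedness of $\{z^k\}$; the rest reduces to assembling the chain rule with Attouch and Lemma \ref{lem:Chain}.
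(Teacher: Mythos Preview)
Your proof is correct and follows essentially the same route as the paper's: one inclusion via Lemma~\ref{lem:Gincl}, the reverse via the chain rule, boundedness of $\{z^k\}$ through Lemma~\ref{lem:Bound} combined with the BCQ, and identification of the limit via Attouch's theorem and Lemma~\ref{lem:Chain}. The only cosmetic difference is that you are a bit more explicit about the subsequence extractions and about verifying $\bar y\in\dom g$.
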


\begin{proof} 
We need only show that
$\Limsup_{\mu\downarrow 0, x\to\bar x}\nabla_xs_f(H(x),\mu)\subset \p f(\bar x)$, 
since the $\Liminf$-inclusion is clear from Lemma \ref{lem:Gincl}.

To this end, let $v\in\Limsup_{\mu\downarrow 0, x\to\bar x}\nabla_xs_f(H(x),\mu)$ be given. Then there exist sequences $\{x^k\}\to\bar x$ and $\{\mu_k\}\downarrow 0$
 such that 
\begin{equation}\label{eq:Limit}
H'(x^k)^T\nabla_x s_g(H(x^k),\mu_k)=\nabla_x s_f (x^k,\mu_k)\to v.
 \end{equation}
Put $z^k:=s_g(H(x^k),\mu_k)\,(k\in\mathbb N)$.
If  $\{z^k\}$ were unbounded, then w.l.g. $\{\frac{z^k}{\|z^k\|}\}\to\bar z\neq 0$, and we infer from \eqref{eq:Limit} that
$$
\bar z\in \nul H'(\bar x)^T.
$$
On the other hand, Lemma \ref{lem:Bound} tells us that $\bar z\in N(H(\bar x)\mid \dom g)$, thus, 
$$
0\neq \bar z \in  N(H(\bar x)\mid \dom g)\cap \nul H'(\bar x)^T,
$$
which contradicts BCQ. Hence,  $\{z^k\}$ is bounded and converges at least on a subsequence, and due to Attouch's theorem \cite[Theorem 12.35]{RoW 98}
the limit (accumulation point) lies in $\partial g(H(\bar x))$. Using this  and the fact that $H'$ is continuous, we get
$$
v\in H'(\bar x)^T\partial g(H(\bar x))=\partial f(\bar x),
$$
where the equality is due to Lemma \ref{lem:Chain}. This concludes the proof.
\end{proof}

\begin{corollary}\label{cor:ConvComp}
Let $s_g$ be an epi-smoothing function for $g$, and suppose
$\omega$ is 1-coercive with $\omega(0)\leq 0$. Then
$s_f(\cdot,\cdot):=s_g(H(\cdot),\cdot)$ is an epi-smoothing function  for $f:=g\circ H$ and   
$$
\Limsup_{\mu\downarrow 0, x\to\bar x}\nabla_xs_f(x,\mu)=\p f(\bar x).
$$
for all $\bar x\in \dom f$ at which the BCQ holds.
\end{corollary}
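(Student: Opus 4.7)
The plan is to assemble two results already proved in this section. The hypothesis that $\omega$ is $1$-coercive with $\omega(0)\le 0$ places us in the inf-convolution framework of Section \ref{sec:InfConv}, so that $s_g(\cdot,\mu) = g\#\omega_\mu$; by Theorem \ref{th:InfCon}, $s_g$ is indeed an epi-smoothing function for $g$. The corollary therefore reduces to two tasks: (i) verify that $s_f(\cdot,\cdot):=s_g(H(\cdot),\cdot)$ is itself an epi-smoothing function for $f$, and then (ii) invoke the gradient-consistency theorem for composite epi-smoothings.

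For (i), I would verify the two defining properties of an epi-smoothing function separately. The continuous differentiability of $s_f(\cdot,\mu)$ for each $\mu>0$ follows from Lemma \ref{lem:AuxEpi} c), which gives continuous differentiability of $s_g(\cdot,\mu)=g\#\omega_\mu$ on $\mathbb R^m$, combined with the classical chain rule and the fact that $H$ is continuously differentiable. The epi-convergence property
$$
\elim_{\mu\downarrow 0} s_f(\cdot,\mu)=f
$$
is exactly the content of Corollary \ref{cor:smooth}, whose hypotheses ($\omega$ is $1$-coercive with $\omega(0)\le 0$) coincide with those given here.

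With $s_f$ recognized as an epi-smoothing function for $f=g\circ H$, task (ii) is immediate: Theorem \ref{th:ConvComp} applied at any $\bar x\in\dom f$ satisfying BCQ yields
$$
\Limsup_{\mu\downarrow 0,\,x\to\bar x}\nabla_x s_f(x,\mu)=\partial f(\bar x),
$$
which is precisely the claim.

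This is essentially a bookkeeping argument; the heavy lifting has already been done. The monotonicity needed for Corollary \ref{cor:smooth} was supplied by Lemma \ref{lem:Mono}, and the gradient-consistency conclusion in Theorem \ref{th:ConvComp} was where the real difficulty lay (using Lemma \ref{lem:Bound} together with BCQ to preclude the smoothing gradients $\nabla_y s_g(H(x^k),\mu_k)$ from escaping to infinity along a direction in $N(H(\bar x)\mid\dom g)\cap\nul H'(\bar x)^T$). So no new obstacle arises in this corollary; the only care required is to correctly identify which hypothesis of the supporting results each assumption of the corollary triggers.
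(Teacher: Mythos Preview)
Your proposal is correct and follows essentially the same route as the paper: invoke Corollary \ref{cor:smooth} for the epi-convergence of $s_f(\cdot,\mu)$ to $f$, then apply Theorem \ref{th:ConvComp} for gradient consistency. You supply a bit more detail than the paper (which simply cites those two results), in particular spelling out that the continuous differentiability of $s_f(\cdot,\mu)$ comes from Lemma \ref{lem:AuxEpi} c) together with the chain rule, but this is exactly the implicit reasoning behind the paper's one-line proof.
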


\begin{proof}
The result follows immediately from Corollary \ref{cor:smooth} and Theorem \ref{th:ConvComp}.
\end{proof}

\noindent
We  point out that, unlike in the convex case in Theorem \ref{th:InfCon}, where we  obtain the gradient consistency condition directly
via Attouch's theorem, we cannot derive it in this case from a generalized version of Attouch's theorem for convex composite functions as it is presented in 
\cite[Theorem 2.1]{Pol 92}, since we do not meet the assumptions there.

\section{Constrained Optimization}\label{sec:ConOpt}

We now apply the results of the previous section the constrained optimization problem 
\begin{equation}\label{co}
\begin{array}{ll}
\mbox{minimize}&\phi(x)\\
\mbox{subject to}&h(x)\in C ,
\end{array}
\end{equation}
where $\map{\phi}{\bR^n}{\bR}$ and $\map{h}{\bR^n}{\bR^m}$ are smooth
mappings and $C\subset\bR^m$ is a nonempty closed convex set.
This is an example of a convex composite optimization problem 
\cite{Bur 85,Bur 87,BuP 92} where
the composite function $f=g\circ H$ is given by
\[
g(\gamma,y):=\gamma+\delta(y\mid C)\quad\mbox{and}\quad
H(x):=\begin{bmatrix}\phi(x)\\ h(x)\end{bmatrix} .
\]
In this case, $g$ is the sum of a smooth convex
function, $g_1(\gamma,y):=\gamma$, and a nonsmooth 
convex function $g_2(\gamma,y):=\delta(y\mid C)$. 
Hence, by Proposition \ref{prop:SmoothCalc}, we can obtain an epi-smoothing function for
$g$ by only smoothing the $g_2$ term. A straightforward computation shows that
\[
e_\mu g_2(y)=  \frac{1}{2\mu}\dist^2(y\mid C).
\]
Therefore, by Corollary \ref{cor:smooth},
\begin{equation}\label{cosf}
s_f(x,\mu)=\phi(x)+\frac{1}{2\mu}\dist^2(h(x)\mid C)
\end{equation}
is an epi-smoothing function for $f$. This is one of the classical smoothing
functions for constrained optimization \cite{FiM 87}. The BCQ becomes the condition 
\begin{equation}\label{cobcq}
\nul h'(x)^T\cap N(h(x)\mid C)=\{0\}.
\end{equation}
In the case where $C=\{0\}^s\times\bR^{m-s}_-$, the function \eqref{cosf} 
is the classical
least-squares smoothing function for nonlinear programming, and \eqref{cobcq}
reduces to the {\em Mangasarian-Fromovitz constraint qualification} (e.g., see \cite[Example 6.40]{RoW 98}).

Corollary \ref{cor:ConvComp} tells us that at every point $\bx$ with $h(\bx)\in C$ we have
\[
\Limsup_{\mu\downarrow 0, x\to\bar x}\nabla_xs_f(x,\mu)=\nabla \phi(\bx)+h'(\bx)^TN(h(\bx)\mid C) ,
\]
whenever condition \eqref{cobcq} holds at $\bx$, where, by Proposition \ref{moreau basics},
$$\nabla_xs_f(x,\mu)=\nabla \phi(x)+h'(x)^T\left(\frac{h(x)-\Pi_C(h(x))}{\mu}\right) .$$

\noindent
The results of  Section \ref{sec:ConvCom} 
allow us to make powerful statements about algorithms that
use the epi-smoothing function \eqref{cosf} to solve the optimization problem
\eqref{co}. We begin by studying the case of cluster points that are feasible for \eqref{co}.

\begin{theorem}\label{th:cosf}
Let $s_f$ be as in \eqref{cosf} with $\phi$, $h$, and $C$ satisfying the
hypotheses specified in \eqref{co}. Let $\{x^k\}\subset\bR^n$ and $\{\mu_k\}\downarrow 0$
satisfy $\norm{\nabla_xs_f(x^k,\mu_k)}\downarrow 0$.
Then every feasible cluster point $\bx$ of $\{x^k\}$  at which \eqref{cobcq} is satisfied, is a 
{\em Karush-Kuhn-Tucker point} for \eqref{co}, i.e., 
\[
0\in \partial f(\bx)=\nabla\phi(\bx)+h'(\bx)^T\ncone{h(\bx)}{C} .
\]
\end{theorem}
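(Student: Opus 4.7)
The plan is to derive Theorem \ref{th:cosf} as a direct consequence of the gradient consistency established in Corollary \ref{cor:ConvComp}, applied to the convex composite decomposition
\[
f=g\circ H,\quad g(\gamma,y):=\gamma+\delta(y\mid C),\quad H(x):=(\phi(x),h(x)),
\]
with the Moreau-envelope smoothing of only the nonsmooth piece $g_2(\gamma,y)=\delta(y\mid C)$. The preliminary step is to confirm that the abstract hypotheses of Section \ref{sec:ConvCom} are all met: $g$ is proper, lsc, and convex; $H$ is smooth; the Moreau kernel $\omega=\tfrac12\|\cdot\|^2$ is 1-coercive with $\omega(0)=0$; and the generic BCQ of Definition \ref{def:BCQ} at $\bar x$ reduces here to \eqref{cobcq}. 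For that reduction I would use $\dom g=\bR\times C$, so that $N(H(\bar x)\mid\dom g)=\{0\}\times N(h(\bar x)\mid C)$, and then combine this with the block structure
\[
H'(\bar x)^T=\begin{bmatrix}\nabla\phi(\bar x) & h'(\bar x)^T\end{bmatrix}
\]
to see that the intersection $N(H(\bar x)\mid\dom g)\cap\nul H'(\bar x)^T=\{0\}$ is equivalent to $N(h(\bar x)\mid C)\cap\nul h'(\bar x)^T=\{0\}$.

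Once this equivalence is in hand, I would extract a subsequence (not relabeled) with $x^k\to\bar x$, and observe that the assumption $\|\nabla_x s_f(x^k,\mu_k)\|\downarrow 0$ together with $\mu_k\downarrow 0$ places the origin in the outer limit:
\[
0\in\Limsup_{\mu\downarrow 0,\,x\to\bar x}\nabla_x s_f(x,\mu).
\]
Corollary \ref{cor:ConvComp} then identifies this outer limit with $\partial f(\bar x)$, and Lemma \ref{lem:Chain} (applicable because BCQ holds) evaluates the latter to
\[
\partial f(\bar x)=H'(\bar x)^T\partial g(H(\bar x))=\nabla\phi(\bar x)+h'(\bar x)^T N(h(\bar x)\mid C),
\]
where I would use $\partial g(\gamma,y)=\{1\}\times N(y\mid C)$ to multiply out the Jacobian transpose. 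Concluding $0\in\nabla\phi(\bar x)+h'(\bar x)^T N(h(\bar x)\mid C)$ is exactly the KKT condition for \eqref{co}.

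I do not expect any serious obstacle: the real work is done upstream in Corollary \ref{cor:ConvComp} and Lemma \ref{lem:Chain}. The only mildly delicate point is the bookkeeping translating between the abstract BCQ of Definition \ref{def:BCQ} and the constraint qualification \eqref{cobcq}, and between the abstract subdifferential $\partial g$ of a sum and the concrete KKT multiplier cone $N(h(\bar x)\mid C)$. Both are routine once the decomposition $g=g_1+g_2$ is written out, but they must be set down explicitly so that the invocation of Corollary \ref{cor:ConvComp} is unambiguous.
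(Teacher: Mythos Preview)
Your proposal is correct and follows essentially the same route as the paper: the proof in the paper is just the two lines ``Lemma \ref{lem:Chain} gives $\partial f(\bar x)=\nabla\phi(\bar x)+h'(\bar x)^T\ncone{h(\bar x)}{C}$; hence, by Corollary \ref{cor:ConvComp}, $\bar x$ is a KKT point,'' with all the verification you spell out (that the abstract BCQ reduces to \eqref{cobcq}, that $\partial g(\gamma,y)=\{1\}\times N(y\mid C)$, etc.) already absorbed into the discussion of Section \ref{sec:ConOpt} preceding the theorem. Your more explicit bookkeeping is exactly what that discussion does, so there is no substantive difference.
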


\begin{proof}
Lemma \ref{lem:Chain} implies that 
$\partial f(\bx)=\nabla\phi(\bx)+h'(\bx)^T\ncone{h(\bx)}{C}$.
Hence, by Corollary \ref{cor:ConvComp}, $\bx$ is a KKT point for \eqref{co}.
\end{proof}

\noindent
Theorem \ref{th:cosf} tells us that the feasible cluster points of sequences
of approximate stationary points of $s_f$ are KKT points, but, from and algorithmic
perspective, this does not give us a mechanism for testing proximity to
optimality via standard optimality conditions. That is, it does not show how to
approximate the multiplier vector. This is addressed by the following corollary.

\begin{corollary}
Let $s_f,\ \phi,\ h,\ C,\ \{x^k\},$ and $\{\mu_k\}$ be as in Theorem \ref{th:cosf},
and let $\bx$ be a cluster point of $\{x^k\}$ at which $h(\bx)\in C$ and \eqref{cobcq} is satisfied.
If $J\subset\mathbb{N}$ is a subsequence
for which $x^k\rightarrow_J\bx$, then the associated subsequence $\{y^k\}_J$,  where
$$
y^k:=\frac{h(x^k)-\Pi_C(h(x^k))}{\mu_k}\quad \forall k\in\mathbb N,
$$ 
remains bounded and every cluster point $\by$  is such that $(\bx,\by)$ is a 
{\em Karush-Kuhn-Tucker pair} for \eqref{co}, i.e.,
\[
0=\nabla \phi(\bx)+h'(\bx)^T\by\quad\mbox{with}\quad \by\in N(h(\bx)\mid C) .
\]
\end{corollary}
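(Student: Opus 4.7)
The plan is to establish boundedness of $\{y^k\}_J$ by a contradiction argument exploiting the BCQ \eqref{cobcq}, and then to pass to the limit in the stationarity relation together with the normal-cone membership. The key structural observation I would use throughout is that, by the characterization of the Euclidean projection onto a closed convex set, one has $h(x^k)-\Pi_C(h(x^k))\in N(\Pi_C(h(x^k))\mid C)$ for every $k$, and hence $y^k\in N(\Pi_C(h(x^k))\mid C)$ since this normal cone is a cone. Moreover, the displayed formula above Theorem \ref{th:cosf} gives $\nabla_x s_f(x^k,\mu_k)=\nabla\phi(x^k)+h'(x^k)^Ty^k$, and by hypothesis this quantity tends to zero.

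First I would prove $\{y^k\}_J$ bounded. Suppose to the contrary that, along some further subsequence (still indexed by $k$), $\|y^k\|\to\infty$. Passing to yet another subsequence we may assume $y^k/\|y^k\|\to \bar z$ with $\|\bar z\|=1$. Dividing the stationarity relation $\nabla\phi(x^k)+h'(x^k)^Ty^k\to 0$ by $\|y^k\|$ and letting $k\to\infty$ (using continuity of $h'$ and boundedness of $\{\nabla\phi(x^k)\}$) yields $h'(\bx)^T\bar z=0$, i.e., $\bar z\in\nul h'(\bx)^T$. On the other hand, $y^k/\|y^k\|\in N(\Pi_C(h(x^k))\mid C)$ and $\Pi_C(h(x^k))\to\Pi_C(h(\bx))=h(\bx)$ by continuity of the projection (since $C$ is convex) and feasibility of $\bx$; the outer semicontinuity of the normal cone mapping $N(\cdot\mid C)$ (see \cite[Proposition 6.6]{RoW 98}) then gives $\bar z\in N(h(\bx)\mid C)$. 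Thus $0\neq\bar z\in\nul h'(\bx)^T\cap N(h(\bx)\mid C)$, contradicting the BCQ \eqref{cobcq}.

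With $\{y^k\}_J$ bounded, any cluster point $\by$ is obtained along some sub-subsequence $y^k\to\by$. Passing to the limit in $\nabla\phi(x^k)+h'(x^k)^Ty^k\to 0$ gives $0=\nabla\phi(\bx)+h'(\bx)^T\by$, and the same outer semicontinuity argument as above, applied now to the convergent sequence $y^k\in N(\Pi_C(h(x^k))\mid C)$, yields $\by\in N(h(\bx)\mid C)$. Hence $(\bx,\by)$ is a KKT pair.

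The only mildly delicate step is the normal-cone passage to the limit in the unbounded case; everything else is routine. That step is straightforward once one notes that $y^k$ (and hence $y^k/\|y^k\|$) is normal to $C$ at the moving base point $\Pi_C(h(x^k))$ rather than at $h(x^k)$ itself, so that osc of $\gph N(\cdot\mid C)$ applies directly without requiring $h(x^k)\in C$.
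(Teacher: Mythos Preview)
Your proof is correct and follows essentially the same approach as the paper: a contradiction argument using the BCQ \eqref{cobcq} for boundedness, based on $y^k\in N(\Pi_C(h(x^k))\mid C)$ together with outer semicontinuity of the normal-cone map, followed by the same limit passage for any cluster point. If anything, you are slightly more explicit than the paper in justifying $\Pi_C(h(x^k))\to h(\bx)$ and in spelling out the gradient formula $\nabla_x s_f(x^k,\mu_k)=\nabla\phi(x^k)+h'(x^k)^Ty^k$.
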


\begin{proof}
Let $J\subset\mathbb{N}$ and $\bx$ be as in the statement of the corollary.
Theorem \ref{th:cosf} tells us that $\bx$ is a KKT point for \eqref{co}, i.e.,  
$0\in \partial f(\bx)=\nabla\phi(\bx)+h'(\bx)^T\ncone{h(\bx)}{C}$.
 We first show that the subsequence $\{y^k\}_J$ given above is necessarily bounded. 

Suppose, to the contrary, that the sequence is not bounded. Then there is a further
subsequence $\hat{J}\subset J$ such that $\norm{y^k}\uparrow_{\hat{J}} +\infty$.
With no loss in generality we may assume that there is a unit vector $\ty$ such that
$y^k/\norm{y^k}\rightarrow_{\hat J}\ty$. Since $y^k\in\ncone{\Pi_C(h(x^k))}{C}$
for all $k$,
the outer semicontinuity of the normal cone operator $z\mapsto \ncone{z}{C}$ relative to $C$, 
cf. \cite[Proposition 6.6]{RoW 98}, implies that 
$\ty\in\ncone{h(\bx)}{C}$. Dividing $\norm{\nabla_xs_f(x^k,\mu_k)}$ by $\norm{y^k}$
and taking the limit over $\hat J$ gives $h'(\bx)^T\ty=0$. But this contradicts
the BCQ \eqref{cobcq} since $\ty$ is a unit vector. Therefore, the sequence $\{y^k\}_J$
is bounded. 

Let $\by$ be any cluster point of the sequence $\{y^k\}_J$
(at least one such cluster point must exist since this sequence is bounded).
As above, $\by\in\ncone{h(\bx)}{C}$, and by the hypotheses,
$0=\nabla \phi(\bx)+h'(\bx)^T\by$. Hence, $\bx$ is a KKT point for \eqref{co}
and $\by$ is an associated KKT multiplier.
\end{proof}

\noindent
We now address the case of infeasible cluster points, i.e., cluster points $\bx$ for
which $h(\bx)\notin C$. To understand this case, we must first review the subdifferential
properties of the distance function $\dist(\cdot\mid C)$ and the associated convex
composite function
\[
\psi(x):=\dist(h(x)\mid C) .
\]
First, recall from \cite[Proposition 3.1]{Bur 91} that
\begin{equation}\label{dist sd}
\partial \dist(y\mid C)=
\left\{\begin{array}{lcl}
\ncone{y}{C}\cap\bB &{\rm if}& y\in C,
\\
\ncone{y}{C+\dist(y\mid C)\bB}\cap\mathrm{bdry}(\bB) &{\rm if} & y\notin C,
\end{array}\right.
\end{equation}
where $\mathrm{bdry}(\bB)$ is the boundary of the unit ball, and, 
by \cite[Example 8.53]{RoW 98}, we also have
\begin{equation}\label{dist sd2}
\partial \dist(y\mid C)=\ncone{y}{C+\dist(y\mid C)\bB}\cap\mathrm{bdry}(\bB)
=\left\{\frac{y-\Pi_C(y)}{\dist(y\mid C)}\right\}\quad \forall\, y\notin C.
\end{equation}
In addition, from \cite[Equation 2.4]{Bur 87}, $\psi$ is
subdifferentially regular on $\bR^n$ with
\begin{equation}\label{psi sd}
\partial \psi(x)=h'(x)^T\partial \dist(h(x)\mid C) .
\end{equation}
These formulas yield the following result.

\begin{theorem}\label{th:cosf-inf}
Let $s_f,\ \phi,\ h,\ C,\ \{x^k\},$ and $\{\mu_k\}$ be as in Theorem \ref{th:cosf},
and let $\bx$ be a cluster point of $\{x^k\}$ at which $h(\bx)\notin C$.
Then $0\in\partial \psi(\bx)$.
\end{theorem}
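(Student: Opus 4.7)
The plan is to exploit the fact that infeasibility of $\bar x$ makes the distance function smooth near $h(\bar x)$, so the gradient formula for $s_f$ from Proposition \ref{moreau basics} has a clean structure that can be rescaled to extract a subdifferential element of $\psi$.

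First, I would fix a subsequence along which $x^k\to\bar x$ (and, after re-indexing, keep writing $k$). Since $C$ is closed and $h(\bar x)\notin C$, continuity gives $d_k:=\dist(h(x^k)\mid C)\to d:=\dist(h(\bar x)\mid C)>0$; in particular $h(x^k)\notin C$ for all large $k$. By \eqref{dist sd2}, the vector
\[
v^k:=\frac{h(x^k)-\Pi_C(h(x^k))}{d_k}
\]
is a unit element of $\partial\dist(h(x^k)\mid C)$, and the gradient formula from the discussion preceding Theorem \ref{th:cosf} rewrites as
\[
\nabla_x s_f(x^k,\mu_k)=\nabla\phi(x^k)+\frac{d_k}{\mu_k}\,h'(x^k)^T v^k.
\]

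Next, since $d_k\to d>0$ and $\mu_k\downarrow 0$, the scalar $d_k/\mu_k\to +\infty$. I would divide the relation $\nabla_x s_f(x^k,\mu_k)\to 0$ by $d_k/\mu_k$; the $\nabla\phi(x^k)$ term is bounded (by continuity of $\nabla\phi$) so it vanishes in the limit, leaving
\[
h'(x^k)^T v^k\longrightarrow 0.
\]
Because $\|v^k\|=1$, I may extract a further subsequence on which $v^k\to\bar v$ with $\|\bar v\|=1$. Continuity of $h'$ yields $h'(\bar x)^T\bar v=0$, and the outer semicontinuity of the convex subdifferential map $y\mapsto\partial\dist(y\mid C)$ (a consequence of \cite[Proposition 7.17]{RoW 98} applied to the continuous convex function $\dist(\cdot\mid C)$) combined with $h(x^k)\to h(\bar x)$ forces $\bar v\in\partial\dist(h(\bar x)\mid C)$.

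Finally, invoking the chain rule \eqref{psi sd} gives
\[
0=h'(\bar x)^T\bar v\in h'(\bar x)^T\partial\dist(h(\bar x)\mid C)=\partial\psi(\bar x),
\]
which is the claim. I do not anticipate a substantive obstacle: the only delicate step is justifying $\bar v\in\partial\dist(h(\bar x)\mid C)$, but outer semicontinuity of the convex subdifferential on the interior of the domain (and $\dist(\cdot\mid C)$ is real-valued everywhere) handles it cleanly. The decisive idea is simply that, on the infeasible side, the penalty term of $s_f$ behaves like $(d_k/\mu_k)\psi$ asymptotically, so its stationarity directly encodes stationarity of $\psi$ at $\bar x$.
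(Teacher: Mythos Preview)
Your proof is correct and follows essentially the same route as the paper: both rescale the relation $\nabla_x s_f(x^k,\mu_k)\to 0$ so that the $\nabla\phi$ term vanishes and the penalty term survives, then pass to the limit using \eqref{dist sd2} and \eqref{psi sd}. The paper simply multiplies by $\mu_k$ and invokes continuity of $\Pi_C$ directly, whereas you divide by $d_k/\mu_k$ and appeal to outer semicontinuity of the subdifferential; since $v^k\to (h(\bar x)-\Pi_C(h(\bar x)))/d$ by continuity of $\Pi_C$, your subsequence extraction and osc argument are unnecessary, but the logic is sound and the underlying idea is the same.
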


\begin{proof}
Let $J\subset \bN$ be such that $x^k\rightarrow_J\bx$. Since 
$\norm{\nabla_xs_f(x^k,\mu_k)}\downarrow 0$, we have 
$\mu_k\norm{\nabla_xs_f(x^k,\mu_k)}\downarrow 0$, and consequently
\[
h'(x^k)^T(h(x^k)-\Pi_C(h(x^k)))\rightarrow 0.
\]
Hence, by the continuity of $\Pi_C$ and \eqref{dist sd2}, $0\in \partial \psi(\bx)$.
\end{proof}

\noindent
Theorem \ref{th:cosf-inf} shows that any algorithm that drives $\nabla_xs_f(x^k,\mu_k)$
to zero as $\mu_k\downarrow 0$ performs admirably even when the problem \eqref{co}
is itself infeasible. That is, in the absence of feasibility, it naturally tries to locate a
{\em nonfeasible stationary point} for \eqref{co} as defined in \cite{Bur 87b}. 
It may happen that the original problem is feasible while all cluster points are
nonfeasible stationary points. This can be rectified by placing a further restriction
on how the iterates $\{x^k\}$ are generated.

\begin{proposition}\label{prop:feasible}
Let $C,\ \phi,\ h$, and $s_f$ be as in \eqref{co} and \eqref{cosf}, and let
$\mu_k\downarrow 0$.
Suppose that there is a known feasible point $\tx$ for \eqref{co}.  
If $\{x^k\}$ is a sequence for which 
$s_f(x^k,\mu_k)\le s_f(\tx,\mu_k)=\phi(\tx)$ for all $k=1,2,\dots$,
then every cluster point of $\{x^k\}$ must be feasible for \eqref{co}.
\end{proposition}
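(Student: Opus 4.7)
The plan is to exploit the simple algebraic structure of the bound $s_f(x^k,\mu_k)\le \phi(\tx)$ together with continuity, with no further variational machinery needed.

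First I would observe that because $\tx$ is feasible we have $h(\tx)\in C$, so $\dist(h(\tx)\mid C)=0$ and consequently $s_f(\tx,\mu_k)=\phi(\tx)$ for every $k$, as asserted in the statement. Substituting the definition \eqref{cosf} into the hypothesis $s_f(x^k,\mu_k)\le \phi(\tx)$ and rearranging yields
\[
\dist^2(h(x^k)\mid C)\;\le\;2\mu_k\bigl(\phi(\tx)-\phi(x^k)\bigr)\qquad\forall\,k\in\bN.
\]

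Now let $\bx$ be any cluster point of $\{x^k\}$ and let $J\subset\bN$ be a subsequence with $x^k\to_J\bx$. Since $\phi$ is continuous (in fact smooth), $\phi(x^k)\to_J\phi(\bx)$, so the sequence $\{\phi(\tx)-\phi(x^k)\}_J$ is bounded. Combined with $\mu_k\downarrow 0$, the displayed inequality forces
\[
\dist^2(h(x^k)\mid C)\;\to_J\;0.
\]
Finally, $h$ is continuous and $\dist(\cdot\mid C)$ is continuous on $\bR^m$, so passing to the limit gives $\dist(h(\bx)\mid C)=0$. Since $C$ is closed, this means $h(\bx)\in C$, i.e., $\bx$ is feasible for \eqref{co}.

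There is no real obstacle here; the only point that might require a line of justification is the finiteness of $\phi(\tx)-\phi(x^k)$ along the convergent subsequence, but this is automatic from the continuity of $\phi$ on $\bR^n$. Note that no constraint qualification, subdifferential calculus, or epi-convergence result is needed: the conclusion follows from a quantitative penalization bound built directly into the quadratic distance term of $s_f$.
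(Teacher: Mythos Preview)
Your proposal is correct and follows essentially the same elementary idea as the paper's proof. The only cosmetic difference is that the paper argues by contradiction (if $\bx$ were infeasible then $\frac{1}{2\mu_k}\dist^2(h(x^k)\mid C)\to_J+\infty$, forcing $\phi(x^k)\to_J-\infty$ and contradicting continuity of $\phi$), whereas you rearrange the same inequality to bound $\dist^2(h(x^k)\mid C)$ directly by $2\mu_k(\phi(\tx)-\phi(x^k))$ and pass to the limit; the underlying content is identical.
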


\begin{proof}
Let $\bx$ be a cluster point of $\{x^k\}$ and let $J\subset\bN$ be such that $x^k\rightarrow_J\bx$.
If $\bx$ is not feasible, then $\frac{1}{2\mu_k}\dist^2(h(x^k)\mid C)\rightarrow_J+\infty$. But
$s_f(x^k,\mu_k)=\phi(x^k)+\frac{1}{2\mu_k}\dist^2(h(x^k)\mid C)\le \phi(\tx)$ giving
the contradiction $\phi(x^k)\rightarrow_J-\infty$.
\end{proof}

\noindent
In fact, without further hypotheses, feasibility might not be attained in the limit even in the prototypical example of convex composite optimization, 
the Gauss-Newton method for solving nonlinear systems of equations. 
It is often the case that the additional hypotheses employed 
are related to the BCQ \eqref{cobcq}.
One way to understand the role of nonfeasible stationary points and their effect on computation
is through constraint qualifications that apply to nonfeasible points. These constraint
qualifications extend \eqref{cobcq}
to points on the whole space.
Among the many possible extensions one might consider,
we use one from the geometry of the subdifferential in \eqref{dist sd}.
We say that the {\em extended constraint qualification} (ECQ) for \eqref{co} 
is satisfied if
\begin{equation}\label{ecq}
\nul h'(x)^T\cap\ncone{h(x)}{C+\dist(h(x)\mid C)\bB}=\{0\} .
\end{equation}
Note that this condition is well defined on all of $\bR^n$ and reduces to \eqref{cobcq} when
$h(x)\in C$. When $h(x)\notin C$, it is easily seen that $0\in \partial \psi(x)$ if and only if
\eqref{ecq} is not satisfied. Hence, if one assumes that ECQ is satisfied at all iterates, then
nonfeasible cluster points cannot exist. For example, if $C=\{0\}$, then a standard global
constraint qualification is to assume that $h'(x)$ is everywhere surjective, i.e., $\nul h'(x)^T=\{0\}$
for all $x$. This implies \eqref{ecq} which simply says that $h'(x)^Th(x)\ne 0$ whenever
$h(x)\ne 0$ and $h'(x)$ is surjective whenever $h(x)=0$.

\section{Final Remarks}\label{sec:FinRem}

In this paper we have synthesized the infimal convolution smoothing ideas proposed by
Beck and Teboulle in \cite{BeT 12} with the notion of gradient consistency
defined by Chen in \cite{Che 12}. To achieve this we make use of epi-convergence techniques
that are well suited to the study of the variational properties of parametrized families of functions.
Using epi-convergence, we defined the notion of epi-smoothing  for which we
established a rudimentary calculus. Epi-smoothing is a weakening of the kinds of smoothing
studied in \cite{BeT 12} where the focus is on convex optimization and the derivation of
complexity results which necessitate stronger forms of smoothing.
We then applied the epi-smoothing ideas to study the
epi-smoothing properties of convex composite functions, a very broad and important class
of nonconvex functions. In particular, we showed that general constrained optimization falls
within this class. Using the epi-smoothing calculus, we easily derived the convergence properties
of a classical smoothing approach to constrained optimization establishing the convergence 
properties even in the case when the underlying optimization problem is not feasible.
This application demonstrates the power of these ideas as well as their ease of use. 

\section{Acknowledgements} The authors thank Prof. Christian Kanzow for helpful comments on earlier
drafts of the paper.

\end{document}